\newcommand{\N}{{\mathbb N}}
\newcommand{\RR}{{\mathbb R}}
\numberwithin{equation}{section}
\newtheorem{theorem}{Theorem}[section]
\newtheorem{lemma}[theorem]{Lemma}
\newtheorem{remark}[theorem]{Remark}
\newtheorem{Prop}[theorem]{Proposition}
\begin{document}

\title[Stochastic gKdV equation]{Generalized KdV equation\\ 
subject to a stochastic perturbation}

\author[A. Millet]{ Annie Millet}
\address{SAMM, EA 4543,
Universit\'e Paris 1 Panth\'eon Sorbonne, 
{\it and} Laboratoire de
Probabilit\'es et Mod\`eles Al\'eatoires,
Universit\'es Paris~6-Paris~7, 
Paris, France}
\email{annie.millet@univ-paris1.fr {\it and}
annie.millet@upmc.fr}

\author[S. Roudenko]{Svetlana Roudenko}
\address{The George Washington University, Department of Mathematics, 
Washington DC, USA}
\email{roudenko@gwu.edu}


\begin{abstract}
We prove global well-posedness of the subcritical generalized Korteweg-de Vries equation (the mKdV and the gKdV with quartic power of nonlinearity) subject to an additive random
perturbation. More precisely, we prove that if the driving noise is a cylindrical Wiener process on $L^2(\RR)$ and the covariance operator is Hilbert-Schmidt in an appropriate Sobolev space, then the solutions with $H^1(\RR)$ data are globally well-posed in $H^1(\RR)$. This extends results obtained by A. de Bouard and A. Debussche for the stochastic KdV equation.
\smallskip

\hfill{\it Dedication: In the memory of Igor Chueshov.}
\end{abstract}

\subjclass[2010]{Primary: 60H15, 35R60, 35Q53; Secondary: 35L75, 37K10}

\keywords{Generalized Korteweg de Vries (gKdV) equation, Cauchy problem, well-posedness, stochastic additive noise}

\maketitle


\section{Introduction}\label{s1}
In this paper we study a subcritical generalization of the Korteweg-de Vries (gKdV) equation subject to some additive random perturbation $f(t)$, that is,
\begin{equation}
\label{gKdV}
\partial_t u(t) + \partial^3_x u(t) +  \mu \, u(t)^k \partial_x u(t) = f(t), \;~~ (x, t) \in \RR \times \RR, \quad u(0,.)=u_0,
\end{equation}
with $k=2$, the mKdV case, or $k=3$, referred to as the gKdV equation.  
Here, $\mu = \pm 1$, which is referred to as focusing or defocusing nonlinearity.

The well-known KdV equation ($k=1$) describes the propagation of long waves in a channel.
Its generalizations ($k>1$) appear in several physical systems; a large class of hyperbolic models can be reduced to these equations.
The well-posedness in the KdV equation has been extensively studied by many authors in the deterministic setting without any forcing term
($f=0$) and goes back to works of Kato \cite{Kato_1983}, Kenig-Ponce-Vega \cite{KPV_CPAM} to name a few;
there is an abundant literature available on that. The question about the minimal regularity assumptions on initial data needed for
well-posedness has been also investigated intensively in recent years; two important methods should be mentioned: the so-called I-method (e.g.,
\cite{CKSTT}) and the probabilistic approach of randomizing the initial data and showing the invariance of Gibbs measures (e.g., \cite{Bou}, \cite{Zh}).
In this paper we also take a probabilistic approach, however, in a completely different setting, where the equation itself has a random term.
We do not aim to obtain the lowest possible regularity for such an equation, but simply show how to combine the deterministic
and probabilistic approaches in this case to study well-posedness for the initial data with finite energy.

In \cite{KPV_CPAM}, Kenig, Ponce, Vega showed that for $k=1, 2, 3$, if $u_0\in H^1(\RR)$,
the subcritical gKdV equation has a global solution in $L^\infty\big([0,\infty) ; H^1(\RR)\big)$.
In the critical case $k=4$ (resp. supercritical case $k>4$), there is a local existence in $H^s(\RR)$ with  $s>0$
(resp. in $\dot{H}^{s_k}(\RR)$ for $s_k=(k-4)/(2k)$), when $u_0$ belongs to the corresponding Sobolev space. Global well-posedness
holds if the $L^2(\RR)$ norm of $u_0$ (resp. the $L^2(\RR)$-norm of $D^{s_k}u_0$) is small.

Here, we study the subcritical case of the generalized KdV equation,
\begin{equation}
\label{S-gKdV}
d u_t + \big( \partial^3_x u(t) +  \mu \, u(t)^k \partial_x u(t) \big) dt = d\,f(t) \equiv
\Phi dW(t),
\end{equation}
where an external random forcing $f$ is driven by a cylindrical Brownian motion $W$ on $L^2(\RR)$ and
multiplied by some smoothing covariance operator $\Phi$.
The driving Wiener process $W$ describes a noise in the environment, that is, a sum of little independent shocks properly renormalized.
The smoothing operator describes spatial correlation of the noise, but the time increments of $\Phi W$ are independent,
that is,  the noise is white in time.
The stochastic KdV equation ($k=1$) on $\RR$ has been studied in a series of papers by A.~de Bouard and A.~Debussche (see e.g.
\cite{deB_Deb_sKdV}, \cite{deB_Deb_Tsu},  \cite{deB_Deb_homogeneous}).  
In \cite{deB_Deb_sKdV} they proved that if $u_0\in H^1(\RR)$ and if $\Phi$ is a Hilbert-Schmidt operator from $L^2(\RR)$ to $H^1(\RR)$,
then there is a global solution to the stochastic KdV equation which belongs a.s. to $C([0,T]; H^1(\RR))$. Using Bourgain spaces,
when $u_0\in L^2(\RR)$ and the covariance operator $\Phi$ is Hilbert-Schmidt both from $L^2(\RR)$ to $L^2(\RR)$ and to $\dot{H}^{-3/8}(\RR)$,  
they have shown in \cite{deB_Deb_Tsu} the existence and uniqueness of the solution  in $L^2\big(\Omega ; C([0,T]; L^2(\RR))\big)$ for any $T>0$.
Note that for the mKdV or gKdV equations, the Bourgain  spaces  approach to lower the regularity of global solutions
is not needed (since it gives the same results but is more technically involved). Therefore, for mKdV and gKdV, $k\geq 2$, it suffices
to use arguments from \cite{KPV_CPAM}.
In \cite{deB_Deb_homogeneous}, the  authors  have proved the global well-posedness of solutions to the stochastic KdV equation in $L^2(\RR)$
(resp. $H^1(\RR)$), when the noise is homogeneous, that is, of the form $u(s) \phi dW(s)$ for a convolution operator $\phi$
defined in terms of an $L^2(\RR) \cap L^1(\RR)$ (resp. $H^1(\RR)\cap L^1(\RR)$) kernel.  They used the Bourgain space approach,
which is necessary to lower regularity of solutions in the KdV case; it is also helpful when dealing with multiplicative noise.

We do not give a full reference for the stochastic KdV and related equations in the periodic setting. However, to guide the reader in the proper direction, we mention a few results. T.~Oh \cite{Oh} studied a stochastic KdV equation on the torus ${\mathcal T}=[0, 2\pi)$. For specific assumptions on the covariance operator $\Phi$, he proved that there is a local well-posedness in a certain Bourgain space if the initial condition belongs to it as well. Other KdV-type models can also be considered with variations of the additive noise, such as adding a derivative to the noise (e.g., see the work of G.~Richards \cite{Richards}).

The main goal of this paper is to obtain the global well-posedness of solutions to the mKdV and gKdV with $k=3$ equations in $H^1(\RR)$; global solutions with finite energy are important for physical applications, i.e., in study of solitary waves. To study well-posedness, we need to set up a specific functional framework that provides the necessary flexibility to use smoothing properties of the Airy group while considering the stochastic term. We note that we consider a driving cylindrical Wiener process, which is quite usual in nonlinear dispersive hyperbolic models, such as the stochastic nonlinear
Schr\"odinger (NLS) equation, this, in its turn, requires the use of non-Hilbert Sobolev spaces. We now state the main result and refer the reader to the next section for all notations.

\begin{theorem}  \label{global}
Let $u_0$ be ${\mathcal G}_0$-measurable and belong to $H^1_x$ a.s.
\begin{enumerate}
\item
Let $k=2$ and $\Phi\in L_2^{0, 1+\epsilon}$ for some $\epsilon >0$. Then given any positive time $T$, there exists a unique solution
  to   \eqref{S-gKdV}  which belongs a.s. to ${\mathcal X}_2^T \cap C([0,T],H^1_x)$.
  Furthermore, if $u_0\in L^2_\omega(H^1_x) \cap   L^6_\omega(L^2_x)$, then $u\in L^2_\omega(L^\infty_t(H^1_x))$.

\item
  Let $k=3$ and $\Phi\in L_2^{0, 1}$. Then given any positive time $T$, there exists a unique solution   to  \eqref{S-gKdV}
  which belongs a.s. to ${\mathcal X}_3^T \cap C([0,T],H^1_x)$.
  Furthermore, if $u_0\in L^2_\omega(H^1_x) \cap   L^{14}_\omega(L^2_x)$, then $u\in L^2_\omega(L^\infty_t(H^1_x))$.
\end{enumerate} 
\end{theorem}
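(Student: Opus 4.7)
The plan is to follow the usual additive-noise reduction combined with the subcritical deterministic $H^1_x$ analysis of Kenig--Ponce--Vega. First I introduce the stochastic convolution
\begin{equation*}
z(t)=\int_0^t S(t-s)\Phi\,dW(s),
\end{equation*}
where $S(t)=e^{-t\partial_x^{3}}$ is the Airy group. Under $\Phi\in L_2^{0,1+\epsilon}$ (resp.\ $L_2^{0,1}$), a factorisation argument together with the Strichartz/maximal-function estimates for $S(t)$ gives that $z$ has a.s.\ paths in $C([0,T];H^1_x)\cap \mathcal{X}_k^T$, with moments of every order. Setting $v=u-z$, equation \eqref{S-gKdV} becomes the pathwise Cauchy problem
\begin{equation*}
\partial_t v+\partial_x^{3}v+\mu\,(v+z)^{k}\partial_x(v+z)=0,\qquad v(0)=u_0,
\end{equation*}
which I treat sample-path by sample-path.

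For local existence I would run a contraction argument in $\mathcal{X}_k^T\cap C([0,T];H^1_x)$ on a random small time interval $T_\omega$, using the smoothing and $L^q_t L^r_x$ estimates for $S(t)$ together with the multilinear $H^1$ estimates of \cite{KPV_CPAM} applied to $(v+z)^k\partial_x(v+z)$; uniqueness and continuous dependence follow from the same estimates. The delicate point is that the forcing $z$ only sits in $H^1_x$ (not in a smoother space), so all nonlinear estimates must be closed exactly at the $H^1$ level.

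To make the local solution global, I would derive a priori bounds by applying It\^o's formula to the two conserved quantities of the deterministic flow,
\begin{equation*}
M(u)=\tfrac12\|u\|_{L^2_x}^{2},\qquad E(u)=\tfrac12\|\partial_x u\|_{L^2_x}^{2}-\frac{\mu}{(k+1)(k+2)}\int_{\RR}u^{k+2}\,dx.
\end{equation*}
The deterministic drift vanishes after integration by parts, the It\^o corrections from the second Fr\'echet derivative produce terms controlled by $\|\Phi\|_{L_2^{0,0}}^{2}$ and $\|\Phi\|_{L_2^{0,1}}^{2}$ (with the extra $\epsilon$ in $k=2$ coming from the cross term $\sum_j\int u(\Phi e_j)\partial_x(\Phi e_j)\,dx$ when symmetrised), and the martingale contribution is handled by Burkholder--Davis--Gundy. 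To close the $H^1$ bound in the subcritical range $k<4$, I use Gagliardo--Nirenberg,
\begin{equation*}
\int_{\RR}|u|^{k+2}\,dx\le C\,\|u\|_{L^2_x}^{(k+4)/2}\,\|\partial_x u\|_{L^2_x}^{k/2},
\end{equation*}
combined with Young's inequality tuned to absorb $\|\partial_x u\|_{L^2_x}^{2}$. For $k=2$ this costs a factor $\|u\|_{L^2_x}^{6}$ and for $k=3$ a factor $\|u\|_{L^2_x}^{14}$, which is exactly where the moment hypotheses $u_0\in L^6_\omega(L^2_x)$ and $u_0\in L^{14}_\omega(L^2_x)$ enter. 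Using $M(u)$ to propagate $L^2_x$ moments from the initial data and Gr\"onwall's lemma, I obtain $\EE\sup_{t\le T}\|u(t)\|_{H^1_x}^{2}<\infty$.

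Finally a stopping-time argument upgrades the pathwise local solution to a global one on $[0,T]$ for any $T>0$. The main obstacle is the It\^o analysis of $E(u)$: the second-derivative trace terms mix the nonlinearity with $\Phi$ and must be bounded using only the $L_2^{0,1}$ regularity of $\Phi$ (with a tiny $\epsilon$ loss in the mKdV case), all while preserving the Gagliardo--Nirenberg--Young cancellation that makes the subcritical energy bound work; this, rather than the local theory, is where the bulk of the technical work lies and where the different regularity requirements on $\Phi$ for $k=2$ and $k=3$ originate.
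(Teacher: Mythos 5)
Your overall strategy (stochastic convolution, pathwise contraction in $\mathcal{X}_k^T$, It\^o's formula on the mass and the Hamiltonian, Gagliardo--Nirenberg--Young to absorb $\|\partial_x u\|_{L^2_x}^2$, yielding exactly the exponents $6$ and $14$) is the paper's strategy, and your exponent bookkeeping is correct. However, there is a genuine gap in the global step: you apply It\^o's formula to $E(u)$ directly for the solution $u$ produced by the fixed point, but that solution is only a mild solution with $C([0,T];H^1_x)\cap\mathcal{X}_k^T$ regularity. The energy identity involves the pairing $(\partial_x^2 u(s),\Phi\,dW(s))$ and the drift term $\int_0^t \mathcal{H}_k'(u(s))[\partial_x^3 u(s)+u(s)^k\partial_x u(s)]\,ds$, neither of which is defined at this regularity; the mild formulation does not let you invoke the conservation laws or control the martingale part by Burkholder--Davis--Gundy. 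The paper resolves this by an approximation scheme that is the technical heart of the global argument and is absent from your proposal: one takes $\Phi_n\in L_2^{0,4}$ and $u_{0,n}\in H^3_x$ converging appropriately, constructs strong solutions $u_n\in L^\infty_t(H^3_x)$ pathwise (via $z_n=u_n-v_n$ and parabolic regularization), carries out the It\^o/energy estimates \emph{uniformly in $n$}, extracts a weak-star limit in $L^2_\omega(L^\infty_t(H^1_x))$, and then identifies that limit with the fixed point of $\mathcal{F}_k$ on a random ball before iterating the local construction to cover $[0,T]$. Without this regularization (or an equivalent device) the a priori bound is formal.

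A secondary but substantive error: you attribute the extra $\epsilon$ in the hypothesis $\Phi\in L_2^{0,1+\epsilon}$ for $k=2$ to a cross term in the It\^o correction of the energy. In fact the energy step only needs $\Phi\in L_2^{0,1}$ for both $k=2$ and $k=3$; the $\epsilon$-loss originates entirely in the \emph{local} theory, namely in proving that the stochastic convolution lies in the maximal-function space $L^4_x(L^\infty_t)$ appearing in $\mathcal{X}_2^T$ (the analogue of de Bouard--Debussche's $L^2_x(L^\infty_t)$ estimate, which is the hardest estimate in the paper and requires a Littlewood--Paley decomposition of $\Phi$). So the asymmetry between $k=2$ and $k=3$ comes from the contraction spaces, not from the energy analysis, and your plan for the local step correspondingly understates where the work lies.
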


While we follow the main framework of \cite{deB_Deb_sKdV}, additional difficulties appear which are due to higher power of nonlinearity considered. 
When $k=2$,  $u_0\in H^{1/4}(\RR)$ and $\Phi$ is Hilbert-Schmidt from $L^2(\RR)$
to $H^{1+\epsilon}(\RR)$ for some $\epsilon>0$, we prove that there exists
a unique solution until some stopping time $T_2>0$. The hypothesis on $\Phi$ with some ``larger derivative"
is due to the  functional space $L^4_x(L^\infty_t)$.
A similar space $L^2_x(L^\infty_t)$ appears for the fixed point argument of the KdV equation; technical problems arise going from $L^2_x$ to $L^4_x$.
When  $k=3$,   $u_0\in H^{1/12}(\RR)$ and $\Phi$ is Hilbert-Schmidt from $L^2(\RR)$ to $H^{5/12}(\RR)$), we prove that there exists
a unique solution until some stopping time $T_3>0$.
This technique does not easily extend to multiplicative noise; indeed change of variables in time is  no longer possible for moments of
norm estimates of the corresponding stochastic integral. The problem of multiplicative noise will be addressed  elsewhere.

The paper is organized as follows.  In section \ref{SIntegral}, we prove some technical lemmas on functional properties of the stochastic integral
$\int_0^t S(t-s) \Phi\, dW(s)$. Using the functional framework introduced in \cite{KPV_CPAM} and a contraction principle
in an appropriate function space, we prove local well-posedness  of the solution in section \ref{Local}.
In section \ref{Global}, we prove that if the initial condition belongs to $H^1(\RR)$ and
$\Phi$ is Hilbert-Schmidt from $L^2(\RR)$ to $H^{1+\epsilon}(\RR)$ when $k=2$ (and  from $L^2(\RR)$  to $H^1(\RR)$ when $k=3$),
the solution can be extended to any given time interval $[0,T]$. Then  it belongs to 
$L^2(\Omega; L^\infty(0,T ; H^1(\RR)))$, and takes a.s. its values in the set of continuous
trajectories from $[0,T]$ to $H^1(\RR)$.
The proof uses the time invariance of   mass and  Hamiltonian for  solutions  to the deterministic gKdV equation.
In order to use these invariant quantities, we need a more regular solution.
This is achieved approximating the solution $u$ by a sequence 
$\{u_n\}_n$ of solutions defined in terms of  smoother initial conditions $u_{0,n}$ and of  more regularizing operators $\Phi_n$.

The first named author collaborated with Igor Chueshov on general 2D hydrodynamical models related with the Navier-Stokes equations.
In this paper, we try to further develop the intertwining between deterministic and stochastic approaches in PDEs. Such interplay was one
of the fundamental contributions of Igor Chueshov's scientific work.

\section{Local existence  of the solution}\label{SIntegral}
In this section we study the stochastic generalized KdV equation with additive noise defined for $x\in \RR$ and $t\geq 0$
\begin{equation}   \label{stoch_gKdV}
d u(t) + \big( \partial^3_x u(t) +  \mu \,   u(t)^k \partial_x u(t) \big) dt = \Phi dW(t), \quad k=2,3,
\end{equation}
with the initial condition $u(x,0)=u_0(x)$.  From now on we will assume $\mu =1$ (focusing case); the defocusing case follows automatically.
The case $k=1$, which is that of the stochastic KdV equation, has been studied in
\cite{deB_Deb_sKdV} and \cite{deB_Deb_Tsu}. Here, $W$ is a cylindrical Wiener process on $L^2(\RR)$ adapted to a filtration $({\mathcal G}_t, t\geq 0)$,
 that is,  $W(t) \varphi =\sum_{j\in {\mathbb N}}  (e_j,\varphi) \, \beta_j(t)$ for any $\varphi \in L^2(\RR)$,
 where the processes $\beta_j(t)$, $ j\geq 0$ are independent one-dimensional Brownian
motions  adapted to  $({\mathcal G}_t)$  and $\{e_j\}_{j\geq 0}$ is an orthonormal basis of $L^2(\RR)$, often referred to as a CONS (complete
orthonormal system).
Note that the process $W(t)$ is not $L^2(\RR)$-valued, but  $W(t)\varphi$ is a centered Gaussian random variable with variance $\|\varphi\|_{L^2}^2
= \sum_{j\geq 0} (e_j, \varphi)^2$.
We suppose that   $\Phi$ is a linear map which is Hilbert-Schmidt from $L^2$ into $H^\sigma(\RR)$ for some non-negative $\sigma$, that is,
\begin{equation}     \label{defPhi}
\| \Phi \|_{L^{0,\sigma}_2}:= \| \Phi\|_{L^0_2(L^2(\RR),H^\sigma(\RR))} <\infty.
\end{equation}
We suppose that $u_0$ is ${\mathcal G}_0$-measurable and  $H^1$-valued.

As in \cite{deB_Deb_sKdV} using Duhamel's formula
we write this equation using its
{mild} formulation, that is,
\begin{equation}     \label{mild_gKdV}
u(t)=S(t) u_0 - \int_0^t S(t-s) \big(u(s)^k \partial_x u(s)\big) ds+ \int_0^t S(t-s) \Phi dW(s),
\end{equation}
where
\[ S(t) u = {\mathcal F}_\xi^{-1}\big( e^{it\xi^3} \hat{u}(\xi) \big),\]
and ${\mathcal F}(u) = \hat{u}$ denotes the Fourier transform of $u$.
 Note that
 \[\int_0^t S(t-s) \Phi dW(s)=\sum_{j\geq 0} \int_0^t S(t-s) \Phi e_j d\beta_j(s)\]
  is  a centered  $H^\sigma(\RR)$ - valued Gaussian variable. Since $S(t-s)$ is an 
$H^\sigma(\RR)$ isometry for all $\sigma \geq 0$, the variance of this stochastic integral is
$$
\int_0^t \sum_{j\geq 0} \|\Phi e_j\|_{H^\sigma_x}^2 ds = t \| \Phi\|_{L^{0,\sigma}_2}^2.
$$
Following the approach in \cite{KPV_CPAM} (and \cite{deB_Deb_sKdV} for the case $k=1$), 
we introduce the following spaces of functions $u:\RR\times [0,T]\to \RR$:
\begin{eqnarray}
{\mathcal X}_2^T&=& \big\{ u\in C\big([0,T];H^{1/4}(\RR) \big) \cap L^4_x\big(L^\infty_t\big) :
 D_x u \in L^{20}_x  \big(L^{5/2}_t\big) , \;
 \nonumber \\
 &&  \;
D_x^{1/4} u \in L^{5}_x \big(L^{10}_t\big) , \;  D_x^{1/4} \partial_x u \in L^{\infty}_x \big(L^{2}_t\big) \big\}
\label{spacek=2}
\end{eqnarray}
for the mKdV equation ($k=2$), and
\begin{eqnarray}
{\mathcal X}_3^T&=&  \big\{ u\in C\big([0,T];H^{1/12}(\RR) \big) \cap L^{42/13}_x\big(L^{21/4}_t\big)\cap L^{60/13}_x\big( L^{15}_t\big) \cap
L^{10/3}_x \big( L^{30/7}_t\big)  :
 \nonumber \\
 &&  \;
D_x^{1/12} u \in L^{10/3}_x \big( L^{30/7}_t\big) , \;  \partial_x u \in L^{\infty}_x \big(L^{2}_t\big) , \;
D_x^{1/12} \partial_x u \in  L^{\infty}_x \big(L^{2}_t\big) \big\}
\label{spacek=3}
\end{eqnarray}
for the gKdV equation ($k=3$). Here, $L^q_x$ (resp. $L^p_t$) denotes $L^q(\RR)$ (resp. $L^p(0,T)$).

In order to prove that the process $v$, defined by the stochastic integral
\begin{equation}     \label{IS}
v(t):=\int_0^t S(t-s) \Phi dW(s), \quad t\in [0,T],
\end{equation}
belongs a.s. to the spaces ${\mathcal X}_k^T$ for $k=2, 3$ under proper assumptions on the operator $\Phi$, we  first prove some technical lemmas. In each result we state the minimal regularity assumption on the operator $\Phi$ and the corresponding  power of $T$
obtained in the upper estimate, in order to deal with the ${\mathcal X}_k^T$-norm of $v$. 
\smallskip

The following lemma is a  generalization of Proposition 3.1 in \cite{deB_Deb_sKdV}.
\begin{lemma}        \label{lemunifH}
Let $\sigma \geq 0$ and $q \in [1,\infty)$. Then for every $T>0$, we have
\[ E\Big( \sup_{t\in [0,T]} \|v(t)\|_{H^\sigma_x}^{2q}\Big) \leq C_q\,  T^{q} \, \| \Phi\|_{L^{0,\sigma}_2}^{2q}.
\]
\end{lemma}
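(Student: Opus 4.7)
The plan is to exploit the fact that $S(t)$ is a unitary group on every Sobolev space $H^\sigma(\RR)$, since multiplication by $e^{it\xi^3}$ on the Fourier side preserves $|\cdot|(1+|\xi|^2)^{\sigma/2}$. This reduces the supremum of a non-martingale stochastic convolution to the supremum of a genuine Hilbert-space martingale, to which one can apply the Burkholder-Davis-Gundy inequality.

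First I would write
\begin{equation*}
v(t) = S(t)\, M(t), \qquad M(t) := \int_0^t S(-s)\, \Phi\, dW(s),
\end{equation*}
and observe that $M$ is an $H^\sigma_x$-valued martingale adapted to $({\mathcal G}_t)$, since for each fixed $s$ the operator $S(-s)\Phi$ is still Hilbert-Schmidt from $L^2(\RR)$ into $H^\sigma(\RR)$ with the same norm as $\Phi$. Because $S(t)$ is an $H^\sigma_x$-isometry, $\|v(t)\|_{H^\sigma_x} = \|M(t)\|_{H^\sigma_x}$, and therefore
\begin{equation*}
\EE\Big(\sup_{t\in[0,T]} \|v(t)\|_{H^\sigma_x}^{2q}\Big) = \EE\Big(\sup_{t\in[0,T]} \|M(t)\|_{H^\sigma_x}^{2q}\Big).
\end{equation*}

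Next I would apply the Burkholder-Davis-Gundy inequality for Hilbert-space valued continuous martingales (see, e.g., Da Prato-Zabczyk) to bound the right-hand side by $C_q\, \EE(\langle\!\langle M\rangle\!\rangle_T^q)$, where $\langle\!\langle M\rangle\!\rangle_T$ is the tensor quadratic variation traced in $H^\sigma_x$. Using the $H^\sigma$-isometry of $S(-s)$ one computes
\begin{equation*}
\langle\!\langle M\rangle\!\rangle_T = \int_0^T \sum_{j\geq 0} \|S(-s)\Phi e_j\|_{H^\sigma_x}^2\, ds = \int_0^T \sum_{j\geq 0} \|\Phi e_j\|_{H^\sigma_x}^2\, ds = T\,\|\Phi\|_{L_2^{0,\sigma}}^2,
\end{equation*}
which is deterministic. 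Plugging this into the BDG estimate yields the announced bound.

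I do not expect a substantial obstacle here: the only subtle point is to verify that one may commute the supremum with the unitary factor $S(t)$, which is precisely the purpose of the factorization $v(t)=S(t)M(t)$. Everything else is a routine application of the Hilbert-space BDG inequality and the fact that the quadratic variation of $M$ is deterministic thanks to the unitarity of $S(\cdot)$ on $H^\sigma_x$.
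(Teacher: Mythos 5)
Your proposal is correct, and its first half coincides exactly with the paper's: both factor $v(t)=S(t)\bar v(t)$ with $\bar v(t)=\int_0^t S(-s)\Phi\,dW(s)$ and use the fact that $S(t)$ is an $H^\sigma_x$-isometry to replace the stochastic convolution by a genuine $H^\sigma_x$-valued martingale whose quadratic variation is the deterministic quantity $t\|\Phi\|_{L_2^{0,\sigma}}^2$. Where you diverge is in how the moments of the supremum are then controlled: you invoke the Burkholder--Davis--Gundy inequality for Hilbert-space-valued continuous martingales as a black box, which immediately gives $\EE\sup_t\|\bar v(t)\|_{H^\sigma_x}^{2q}\leq C_q\,\EE\big(\langle\!\langle \bar v\rangle\!\rangle_T^q\big)=C_q T^q\|\Phi\|_{L_2^{0,\sigma}}^{2q}$. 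The paper instead proves this estimate by hand: it applies It\^o's formula twice to obtain $\|\bar v(t)\|_{H^\sigma_x}^{2q}=T_1(t)+T_2(t)+T_3(t)$, bounds the bounded-variation terms $T_2,T_3$ directly using the deterministic quadratic variation, and controls the real-valued martingale term $T_1$ via the Davis inequality followed by Young's inequality to absorb $\sup_t\|\bar v(t)\|^{2q}$ into the left-hand side. Your route is shorter and avoids the paper's detour of first proving the bound for $q\in[2,\infty)$ (needed so that $y\mapsto y^q$ is twice differentiable in the It\^o step) and then descending to $q\in[1,2)$ by H\"older; the price is that you must cite the vector-valued BDG inequality in the precise form with the traced tensor quadratic variation, which is exactly what the paper's hands-on computation reproves in this special case. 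Both arguments are sound and yield the same constant structure $C_qT^q\|\Phi\|_{L_2^{0,\sigma}}^{2q}$.
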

\begin{proof}
The proof is quite classical; it is sketched for the sake of completeness.
The upper estimate  is proved for $q\in [2,\infty)$ and deduced for
$q\in [1,\infty)$ by H\"older's inequality.  Let $J_\sigma u  = {\mathcal F}^{-1}
\Big( (1+| \xi |^2)^{\frac{\sigma}{2}} \hat{u}(\xi)\Big)$. 
First, note that since $S(t)$ is a group and an $H^\sigma_x$-isometry, we have $\|v(t)\|_{H_x^\sigma} = \| \bar{v}(t)\|_{H_x^\sigma}$,
where $\bar{v}(t)=\int_0^t S(-s) \Phi dW(s)$.
For fixed $t\in [0,T]$ the random variable $\bar{v}(t)$ is   an  $H_x^\sigma$ - valued, Gaussian with mean zero and variance
$\int_0^t \sum_{j\geq 0} \|J_\sigma S(-s) \Phi e_j \|_{L^2_x}^2 \, ds = t\|\Phi\|_{L_2^{0,\sigma}}^2$,
where $\{e_j\}_{ j\geq 0}$ is the  CONS  of $L^2(\RR)$  in the definition of $W$.
It\^o's formula implies
\[ \|\bar{v}(t)\|_{H^\sigma_x}^2 = 2\int_0^t \big( J_\sigma \bar{v}(s) , J_\sigma  S(-s) \Phi dW(s) \big) + \int_0^t \sum_{j\geq 0}
\|J_\sigma S(-s) \Phi e_j \|_{L^2_x}^2  ds
\]
for every $t\in [0,T]$. Using once more the It\^o formula, we deduce that
for $q\in [ 2,\infty)$,  we have $\|\bar{v}(t)\|_{H^\sigma_x}^{2q} = \sum_{i=1}^3 T_i(t)$, where
\begin{align*}
 T_1(t)& = 2q \int_0^t \big( J_\sigma \bar{v}(s) , J_\sigma  S(-s) \Phi dW(s) \big) \, \|\bar{v}(s)\|_{H^\sigma_x}^{2(q-1)},\\
 T_2(t)&= q\int_0^t \| \Phi\|_{L^{0,\sigma}_2}^2 \, \|\bar{v}(s)\|_{H^\sigma_x}^{2(q-1)}\, ds, \\
 T_3(t)&=2q(q-1) \int_0^t  
 \sum_{j\in \N} \big(  J_\sigma  S(-s) \Phi e_j\, , \, J_\sigma \bar{v}(s)\big)^2 \, \|\bar{v}(s)\|_{H^\sigma_x}^{2(q-2)}\, ds.
 \end{align*}
The Cauchy-Schwarz inequality  implies
 \[ E \Big( \sup_{t\in [0,T]} (T_2(t) + T_3(t)) \Big) \leq C_q \|\Phi\|_{L^{0,\sigma}_2}^2 E\Big( \int_0^T \|\bar{v}(s)\|_{H^\sigma_x}^{2(q-1)} ds\Big)
 \leq C_q \, T^q\,  \|\Phi\|_{L^{0,\sigma}_2}^{2q}.
 \]
The Davies inequality for martingales, Young's inequality  and Fubini's theorem imply
 \begin{align*}
 E \Big(& \sup_{t\in [0,T]} T_1(t) \Big)  \leq 6q\, E\Big( \Big\{ \int_0^T \|  \bar{v}(s)\|_{H_x^\sigma}^{4(q-1)} \,
\sum_{j\geq 0} (J_\sigma \bar{v}(s), J_\sigma S(-s) \Phi e_j)^2  ds \Big\}^{\frac{1}{2}} \Big) \\
 &\leq 6q \, \sqrt{T}\, \, \|\Phi\|_{L^{0,\sigma}_2}\;   E\Big(  \sup_{s\in [0,T]} \|v(s)\|_{H^\sigma_x}^{2q-1} \Big) 
 \leq  \frac{1}{2} E\Big(  \sup_{s\in [0,T]} \|v(s)\|_{H^\sigma_x}^{2q} \Big) + C_q T^q \|\Phi\|_{L^{0,\sigma}_2}^{2q},
 \end{align*}
which concludes the proof.
 \end{proof}

The following result will be used to upper estimate one of the norms in the definition of $\|v \|_{ {\mathcal X}^T_2}$.
\begin{lemma}    \label{lem0}
Let $p,q$ satisfy $2\leq p\leq q<\infty$ and  $\sigma \geq 0$. Then for some $C>0$, we have
\begin{equation}      \label{infty_p}
\| D^{\sigma+1}_x v\|_{L^\infty_x(L^q_\omega(L^p_t))}\leq C \, T^{\frac{1}{p}} \, \|\Phi\|_{L_2^{0,\sigma}}.
\end{equation}
\end{lemma}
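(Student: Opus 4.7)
The plan is to reduce the bound to the deterministic Kato local smoothing estimate
$\|D_x S(\cdot) f\|_{L^\infty_x L^2_t(\RR)} \leq C\|f\|_{L^2_x}$ (from \cite{KPV_CPAM}) applied to the kernels $\Phi e_j$, after first moving the expectation inside. The first step is to exploit the hypothesis $p \leq q$ via Minkowski's integral inequality to swap the order of $L^q_\omega$ and $L^p_t$, namely
\begin{equation*}
  \Big( \EE \big\| D_x^{\sigma+1} v(\cdot,x)\big\|_{L^p_t}^q\Big)^{\frac{1}{q}}
  \leq \Big( \int_0^T \big(\EE |D_x^{\sigma+1} v(t,x)|^q\big)^{\frac{p}{q}}\, dt \Big)^{\frac{1}{p}}.
\end{equation*}

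Next, I would use the Gaussian (equivalently BDG) moment bound. For fixed $x,t$, the scalar random variable $D_x^{\sigma+1} v(t,x) = \sum_j \int_0^t [D_x^{\sigma+1} S(t-s)\Phi e_j](x)\, d\beta_j(s)$ is centered Gaussian with variance
\begin{equation*}
  \Sigma(t,x)^2 := \sum_{j\geq 0} \int_0^t \big| [D_x^{\sigma+1} S(t-s)\Phi e_j](x)\big|^2\, ds,
\end{equation*}
so $\EE|D_x^{\sigma+1} v(t,x)|^q \leq C_q\, \Sigma(t,x)^q$. Hence it suffices to show that $\Sigma(t,x)$ is bounded uniformly in $(t,x)$ by a multiple of $\|\Phi\|_{L_2^{0,\sigma}}$; once this is done, the remaining integration over $t\in [0,T]$ just produces the factor $T^{1/p}$, giving the claim after taking $\sup_x$.

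The uniform bound on $\Sigma(t,x)$ is where the Kato smoothing enters, and it is the most substantive step. Writing $D_x^{\sigma+1} S(r)\Phi e_j = D_x S(r)\, D_x^\sigma \Phi e_j$, the Kato estimate yields
\begin{equation*}
  \int_0^\infty \big|[D_x^{\sigma+1} S(r)\Phi e_j](x)\big|^2\, dr
  \leq \big\| D_x S(\cdot) D_x^\sigma \Phi e_j\big\|_{L^\infty_x L^2_t(\RR)}^2
  \leq C \|D_x^\sigma \Phi e_j\|_{L^2_x}^2
\end{equation*}
uniformly in $x$. Summing in $j$ and using $|D_x^\sigma f|\leq |J_\sigma f|$ pointwise on the Fourier side (so $\sum_j \|D_x^\sigma \Phi e_j\|_{L^2_x}^2 \leq \|\Phi\|_{L_2^{0,\sigma}}^2$), and bounding the integral on $[0,t]$ by the integral on $[0,\infty)$ after the change of variable $r=t-s$, one gets $\Sigma(t,x)^2 \leq C\|\Phi\|_{L_2^{0,\sigma}}^2$ uniformly. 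Substituting this into the Minkowski/Gaussian estimate above yields the desired bound.

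The only genuine obstacle is ensuring the Kato smoothing is applied with the right derivative count and on the correct time interval; everything else is soft (Minkowski, Gaussian moments, bound of $D^\sigma$ by $J_\sigma$). Note that the argument crucially uses the pathwise integration over $t$ on the outside, so that the smoothing gain in time can be summed against the noise intensity once, rather than having to be recovered for each fixed $t$.
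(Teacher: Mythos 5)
Your proposal is correct and follows essentially the same route as the paper: both arguments reduce the claim to the Kato local smoothing estimate $\sup_x\int_0^t|D_x^{\sigma+1}S(s)\Phi e_j|^2\,ds\le C\|\Phi e_j\|_{H^\sigma_x}^2$ applied to each $\Phi e_j$ after a change of variables, combined with the Gaussian moment bound for the stochastic integral and a summation over the CONS. The only cosmetic difference is that you exploit $p\le q$ via Minkowski's inequality to swap $L^q_\omega$ and $L^p_t$, whereas the paper uses H\"older's inequality in $t$ to pass from $L^p_t$ to $L^q_t$ at the cost of $T^{\frac{q}{p}-1}$; the two devices are interchangeable here and yield the same power of $T$.
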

\begin{proof}
Since $q\geq p$, H\"older's inequality with respect to $dt$, Fubini's theorem and moments of the stochastic integral yield
for the CONS $\{e_j\}_{j\geq 0}$ of $L^2(\RR)$ in the definition of $W$:
\begin{align*}
\sup_{x\in \RR} E\Big| \int_0^T& \big| D^{\sigma+1}_x \int_0^t S(t-s) \Phi dW(s) \big|^p dt \Big|^{\frac{q}{p}} \\
& \leq T^{\frac{q}{p}-1} \; \sup_{x\in \RR} E\Big( \int_0^T \big| D^{\sigma+1}_x \int_0^t S(t-s) \Phi dW(s) \big|^q dt \Big) \\
 & \leq  C_q\, T^{\frac{q}{p}-1} \; \sup_{x\in \RR} \int_0^T \Big| \sum_{j\geq 0}
 \int_0^t | D^{\sigma+1}_x S(t-s) \Phi e_j\big|^2 ds \Big|^{\frac{q}{2}} dt \\
 &  \leq C_q\, T^{\frac{q}{p}-1} \;  \int_0^T  \sup_{x\in \RR} \Big| \sum_{j\geq 0}
 \int_0^t | D^{\sigma+1}_x S(t-s) \Phi e_j\big|^2 ds \Big|^{\frac{q}{2}} dt \\
&  \leq C_q\, T^{\frac{q}{p}-1} \;  \int_0^T  \Big| \sum_{j\geq 0}
 \; \sup_{x\in \RR} \int_0^t | D^{\sigma+1}_x S(t-s) \Phi e_j\big|^2 ds \Big|^{\frac{q}{2}} dt .
\end{align*}
The local smoothing property (see Lemma 2.1 in \cite{KPV_JAMS}) implies that for every $j\in \N$ and $t\in [0,T]$
\[ \sup_{x\in \RR} \int_0^t | D^{\sigma+1}_x S(s) \Phi e_j\big|^2 ds \leq C\, \|D^\sigma_x \Phi e_j\|_{L^2(\RR)}^2
\leq C \|\Phi e_j\|_{H^\sigma_x}^2.
\]
Therefore,
\[ \|D^{\sigma+1}_x v\|^q_{L^\infty_x(L^q_\omega(L^p_t))}
\leq C\,  T^{\frac{q}{p}-1 } \, T \, \Big( \sum_{j\geq 0} \|\Phi e_j \|_{H^\sigma_x}^2\Big)^{\frac{q}{2}}
 \leq C \, T^{\frac{q}{p}}\,  \|\Phi\|_{L_2^{0,\sigma}}^q.
\]
This completes the proof of \eqref{infty_p}.
\end{proof}

\begin{lemma}      \label{p<q-Lqx(Lpt)}
Let $p,q$ be such that $2\leq p< q<\infty$;  for $\gamma \geq \frac{q-2}{q}$ let $\tilde{\sigma}=\gamma \frac{q}{q-2}\geq 1$. There
 exists a positive constant C such that
 \begin{equation}          \label{Lqx(Lpt)p<q}
E\big( \|D^\gamma_x v\|^q_{L^q_x(L^p_t)} \big)  \leq C\, T^{\frac{q}{p}+1} \|\Phi\|_{L_2^{0,\tilde{\sigma}-1}}^q.
 \end{equation}
\end{lemma}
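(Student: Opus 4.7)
The strategy combines three ingredients: a reduction to the Gaussian variance via Minkowski and hypercontractivity; the KPV local smoothing estimate for the Airy group; and interpolation with the $L^2$ energy estimate to produce a mixed-norm bound with a single Sobolev index.

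First I would write $E\|D^\gamma_x v\|^q_{L^q_x(L^p_t)} = \|D^\gamma_x v\|^q_{L^q_\omega(L^q_x(L^p_t))}$, use Fubini (same exponents $q$) to commute $L^q_\omega$ with $L^q_x$, and then apply Minkowski twice (using $q \geq p$): first to move $L^q_\omega$ past $L^p_t$ to the innermost position, and then to bound $\|\sigma\|_{L^q_x(L^p_t)}$ by $\|\sigma\|_{L^p_t(L^q_x)}$. Here $\sigma^2(x,t) := \sum_{j\geq 0} \int_0^t |D^\gamma_x S(t-s)\Phi e_j(x)|^2\,ds$ is the variance of the centered Gaussian variable $D^\gamma_x v(x,t)$. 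Gaussian hypercontractivity gives $\|D^\gamma_x v(x,t)\|_{L^q_\omega} \leq C_q\, \sigma(x,t)$, hence
\begin{equation*}
E\|D^\gamma_x v\|^q_{L^q_x(L^p_t)} \leq C_q\, \|\sigma\|^q_{L^p_t(L^q_x)} = C_q\, \Big(\int_0^T \|\sigma(\cdot,t)\|^p_{L^q_x}\, dt\Big)^{q/p}.
\end{equation*}

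The core of the proof is the bound on $\|\sigma(\cdot,t)\|_{L^q_x}$. Minkowski's inequality for $L^q_x(\ell^2_j)$, valid since $q \geq 2$, bounds $\|\sigma(\cdot,t)\|^2_{L^q_x}$ by $\sum_{j\geq 0} \|D^\gamma_x S(\cdot)\Phi e_j\|^2_{L^q_x(L^2_s(0,t))}$, reducing matters to the Airy group estimate
\begin{equation*}
\|D^\gamma_x S(\tau) g\|_{L^q_x(L^2_\tau(0,t))} \leq C\, t^{1/q}\, \|g\|_{H^{\tilde\sigma - 1}_x}.
\end{equation*}
I would obtain this by interpolating two endpoint estimates: the KPV local smoothing of Lemma 2.1 in \cite{KPV_JAMS}, which reads $\|D^\gamma_x S(\tau) g\|_{L^\infty_x(L^2_\tau)} \leq C\|g\|_{H^{\gamma-1}_x}$ with constant independent of $t$, and the $L^2$ isometry $\|D^\gamma_x S(\tau) g\|_{L^2_x(L^2_\tau(0,t))} = t^{1/2}\|g\|_{H^\gamma_x}$. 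Complex interpolation of the operator $g \mapsto D^\gamma_x S(\cdot)g$ between these two pairs, with the target space $L^q_x(L^2_\tau)$ corresponding to the parameter $\theta$ satisfying $1/q = \theta/2$, yields a map $H^{\gamma-(q-2)/q}_x \to L^q_x(L^2_\tau(0,t))$ of norm $\leq Ct^{1/q}$. The hypothesis $\gamma \geq (q-2)/q$ is precisely $\gamma-(q-2)/q \leq \tilde\sigma - 1$, so monotonicity of the $H^s_x$ scale upgrades the source to $H^{\tilde\sigma-1}_x$. Summing over the CONS yields $\|\sigma(\cdot,t)\|^2_{L^q_x} \leq C\, t^{2/q}\, \|\Phi\|^2_{L_2^{0,\tilde\sigma-1}}$, and substituting into the previous display together with $\int_0^T t^{p/q}\,dt \leq C\, T^{p/q+1}$ produces the stated bound $C\, T^{q/p+1}\|\Phi\|^q_{L_2^{0,\tilde\sigma-1}}$.

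The main obstacle is the mixed-norm Airy estimate in the second display. An elementary H\"older argument in $x$ (interpolating $L^q_x$ pointwise in $\tau$ between $L^2_x$ and $L^\infty_x$) only produces the product $\|g\|^{(q-2)/q}_{H^{\gamma-1}_x}\|g\|^{2/q}_{H^\gamma_x}$, which by log-convexity of $s \mapsto \|\cdot\|_{H^s_x}$ only dominates $\|g\|_{H^{\gamma-(q-2)/q}_x}$ rather than being dominated by any single Sobolev norm. It is precisely complex interpolation of the linear operator $D^\gamma_x S(\tau)$ between the two endpoint pairs that is needed to collapse the two orders $\gamma-1$ and $\gamma$ into the single order $\gamma-(q-2)/q \leq \tilde\sigma-1$ appearing on the right-hand side of the lemma.
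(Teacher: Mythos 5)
Your argument reaches the stated bound by a genuinely different route from the paper, and it is correct in the case the paper actually needs later ($\gamma=1$ in Proposition \ref{SI_Xk}). The paper never passes through the Gaussian variance: it first proves the two stochastic endpoint estimates $\|D^{\tilde\sigma}_x v\|_{L^\infty_x(L^q_\omega(L^p_t))}\le CT^{1/p}\|\Phi\|_{L_2^{0,\tilde\sigma-1}}$ (Lemma \ref{lem0}, which encodes the Kato local smoothing gain of one derivative) and $\|v\|_{L^2_x(L^q_\omega(L^p_t))}\le CT^{1/p+1/2}\|\Phi\|_{L_2^{0,0}}$, and then interpolates the random function $v$ itself in the pair (derivative order, $L^r_x$) via Proposition A1 of \cite{deB_Deb_sKdV}, with $\theta=\gamma/\tilde\sigma=1-2/q$ so that the intermediate integrability exponent is exactly $q$. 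You instead strip off the randomness first (Fubini, two Minkowski steps using $q\ge p$, equivalence of Gaussian moments) and interpolate the deterministic operator $g\mapsto D^\gamma_xS(\cdot)g$. Both routes combine the same three ingredients (local smoothing, the $L^2_x$ isometry, and interpolation between an $L^2_x$ and an $L^\infty_x$ endpoint) and both yield $T^{q/p+1}$; yours separates the probabilistic and deterministic content more cleanly, at the price of invoking complex interpolation of mixed-norm spaces with an $L^\infty_x$ endpoint, which is precisely the technical point the paper delegates to Proposition A1 of \cite{deB_Deb_sKdV}.

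One step needs repair. Your endpoint $\|D^\gamma_xS(\tau)g\|_{L^\infty_x(L^2_\tau)}\le C\|g\|_{H^{\gamma-1}_x}$ is the smoothing identity applied to $D^{\gamma-1}_xg$, so its natural right-hand side is the homogeneous quantity $\|D^{\gamma-1}_xg\|_{L^2_x}$. For $\gamma\ge1$ this is dominated by $\|g\|_{H^{\gamma-1}_x}$, but for $\gamma<1$ it is not (take $\hat g$ concentrated near $\xi=0$: the homogeneous negative-order norm blows up relative to the inhomogeneous one), and the hypothesis of the lemma only requires $\gamma\ge(q-2)/q$, which permits $\gamma<1$. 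The fix is to place the endpoints where the paper effectively does: the $L^\infty_x$ endpoint at derivative order $\tilde\sigma\ge1$ with source $H^{\tilde\sigma-1}_x$ (so $\tilde\sigma-1\ge0$ and there is no homogeneity issue), and the $L^2_x$ endpoint at order $0$ with source $L^2_x$ and norm $t^{1/2}$. With $\theta=(q-2)/q$ the interpolated operator is again $D^\gamma_xS(\cdot):H^{\theta(\tilde\sigma-1)}_x\to L^q_x(L^2_\tau)$ with norm $Ct^{1/q}$, and $\theta(\tilde\sigma-1)=\gamma-(q-2)/q\le\tilde\sigma-1$, so your conclusion is unchanged. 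Everything else in your argument (the reduction to the variance, the Minkowski steps requiring $q\ge p$ and $q\ge2$, and the final time integration producing $T^{q/p+1}$) is sound.
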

\begin{proof}
Lemma \ref{lem0} applied with $\sigma=\tilde{\sigma}-1$ yields
\[ \| D^{\tilde{\sigma}}_x v\|_{L^\infty_x(L^q_\omega(L^p_t))}\leq C \, T^{\frac{1}{p}} \, \|\Phi\|_{L_2^{0,\tilde{\sigma}-1}}.
\]
The proof of \eqref{Lqx(Lpt)p<q} relies on the above inequality and on the upper estimate
\begin{equation}    \label{2x-qomega-pt}
\| v\|_{L^2_x(L^q_\omega(L^p_t))} \leq C T^{\frac{1}{p}+\frac{1}{2} } \, \|\Phi\|_{L^{0,0}_2}.
\end{equation}
Indeed, suppose that \eqref{2x-qomega-pt}  has been proved. Since  $\gamma \in [0,\tilde{\sigma}] $,
an interpolation argument (see \cite{deB_Deb_sKdV} Proposition A1)
proves that for $p(\gamma)$ defined by
$\frac{1}{p(\gamma)} = \frac{1}{2} \big( 1- \frac{\gamma}{\tilde{\sigma}}\big)$,  we have $D^\gamma v \in L^{p(\gamma)}_x(L^q_\omega(L^p_t))$.
Note that  $\frac{\gamma}{\tilde{\sigma}} = 1-\frac{2}{q}$;  
hence, $p(\gamma)=q$ and the Fubini theorem
implies that $D^\gamma v \in L^q_\omega(L^q_x(L^p_t))$. Furthermore,
\[ \| D^\gamma v\|_{L^q_\omega(L^q_x(L^p_t))} \leq C \| v\|_{L^2_x(L^q_\omega(L^p_t))}^{1-\frac{\gamma}{\tilde{\sigma}} }\;
 \|D^{\tilde{\sigma}}_x v\|_{L^\infty_x(L^q_\omega(L^p_t))}^{\frac{\gamma}{\tilde{\sigma}}}
 \leq C\,  T^{\frac{1}{p}+\frac{1}{q}}\,  \|\Phi\|_{L_2^{0,\tilde{\sigma}-1}} .
 \]
 Thus,  in order to complete the proof of the lemma, we have to check that \eqref{2x-qomega-pt} holds.
 Since $q\geq p$, H\"older's inequality applied with respect to $dt$ and moments of the stochastic integral imply that for
the  CONS $\{e_j\}_{ j\geq 0}$ of $L^2(\RR)$ in the definition of $W(t)$, we have
 \begin{align*}
\| v\|_{L^2_x(L^q_\omega(L^p_t))}^2   &=
 \int_{\RR} \Big| E\Big( \Big\{ \int_0^T  \Big| \int_0^t S(t-s) \Phi dW(s) \Big|^p \, dt\, \Big\}^{\frac{q}{p}}\Big) \Big|^{\frac{2}{q}} dx \\
 & \leq T^{( \frac{q}{p}-1)\frac{2}{q}} \; \int_{\RR} \Big| E \;  \int_0^T  \big| \int_0^t S(t-s) \Phi dW(s) \big|^q dt \Big|^{\frac{2}{q}} dx\\
 &  \leq  C_q\, T^{ \frac{2}{p}- \frac{2}{q}} \; \int_{\RR} \Big[  \int_0^T  \Big( \sum_{j\geq 0} \int_0^t \big|S(t-s) \Phi e_j \big|^2 ds\Big)^{\frac{q}{2}}
  dt \Big]^{\frac{2}{q}} dx\\
  & \leq  C_q \,  T^{ \frac{2}{p}- \frac{2}{q}} \; \int_{\RR} \Big\|   \sum_{j\geq 0} \int_0^t \big|S(s) \Phi e_j \big|^2 ds \Big\|_{L^{\frac{q}{2}}_t}  dx,
 \end{align*}
 where in the last step we change variable $s$ to $t-s$.
 The Minskowski inequality implies that
 \begin{align*}
\| v\|_{L^2_x(L^q_\omega(L^p_t))}^2
 &\leq C_q \,  T^{ \frac{2}{p}- \frac{2}{q}} \; \int_{\RR} \, \sum_{j\geq 0}  \Big\|    \int_0^t \big|S(s) \Phi e_j \big|^2 ds \Big\|_{L^{\frac{q}{2}}_t}  \, dx\\
 &\leq  C_q \,  T^{ \frac{2}{p}- \frac{2}{q}} \; \int_{\RR} \sum_{j\geq 0}  \Big\{ \int_0^T \Big|    \int_0^T \big|S(s) \Phi e_j \big|^2 ds
 \Big|^{\frac{q}{2}} dt \Big\}^{\frac{2}{q}}   dx\\
&\leq C_q \,  T^{ \frac{2}{p}} \sum_{j\geq 0} \int_0^T \Big( \int_\RR \big|S(s) \Phi e_j \big|^2 dx \Big) ds \leq C\, T^{\frac{2}{p}+1} \sum_{j \geq 0}
\|\Phi e_j\|_{L^2_x}^2.
 \end{align*}
 This completes the proof of \eqref{2x-qomega-pt}.
\end{proof}

The following lemma extends Proposition 3.3 in \cite{deB_Deb_sKdV} to the case $\sigma <\frac{3}{4}$. The notation $a\vee b$
means $\max(a,b)$, while $a\wedge b$ means $\min(a,b)$.
\begin{lemma}       \label{inftyx-2t}
Let $\sigma>0$ and $\epsilon \in (0,  2)\cap (0,\sigma]$. Then there exists a constant $C>0$ such that
\begin{equation}    \label{Linftyx(L2t)}
E\Big( \| D^{\sigma - \epsilon}_x \partial_x v\|_{L^\infty_x(L^2_t)}^2\Big)
 \leq C\, T^2 \, \|\Phi\|^2_{L_2^{0, (\frac{1}{2}-\frac{\epsilon}{4}) \vee \sigma}}.
\end{equation}
Furthermore,
\begin{equation}      \label{LinftyxDx}
E\Big( \| \partial_x  v\|_{L^\infty_x(L^2_t)}^2\Big)
 \leq C\, T^2 \, \|\Phi\|^2_{L_2^{0, \frac{2}{5}}}.
\end{equation}
\end{lemma}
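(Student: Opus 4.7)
Plan. I would combine the Kenig--Ponce--Vega local smoothing estimate (Lemma 2.1 of \cite{KPV_JAMS}) with a Sobolev / Gagliardo--Nirenberg embedding in the spatial variable, and then interpolate carefully so as to reach the sharp regularity index $(\tfrac{1}{2}-\tfrac{\epsilon}{4})\vee\sigma$.

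Fixing $x\in\RR$, expanding $v$ along the CONS $\{e_j\}$, and using It\^o's isometry together with stochastic Fubini (as in the proofs of Lemmas \ref{lem0} and \ref{p<q-Lqx(Lpt)}) gives
$E\int_0^T|D_x^{\sigma-\epsilon}\partial_xv(x,t)|^2\,dt\leq T\sum_j\int_0^T|D_x^{\sigma-\epsilon}\partial_xS(\tau)\Phi e_j(x)|^2\,d\tau$.
Applying the KPV smoothing estimate mode by mode with $\sup_x$ inside the sum then yields the pointwise-in-$x$ bound
$\sup_x E\int_0^T|D_x^{\sigma-\epsilon}\partial_xv(x,t)|^2\,dt\leq CT\|\Phi\|_{L_2^{0,\sigma-\epsilon}}^2$.
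This is however a $\sup_xE$ estimate, not the desired $E\sup_x$ one.

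To upgrade $\sup_xE$ to $E\sup_x$, I would invoke the 1-D Gagliardo--Nirenberg inequality applied to the scalar $h(x):=\|D_x^{\sigma-\epsilon}\partial_xv(x,\cdot)\|_{L^2_t}$: for $s>1/2$, $\|h\|_{L^\infty_x}^2\leq C\|h\|_{L^2_x}^{2-1/s}\|D_x^sh\|_{L^2_x}^{1/s}$, together with the pointwise bound $|D_x^sh(x)|\leq\|D_x^s(D_x^{\sigma-\epsilon}\partial_xv)(x,\cdot)\|_{L^2_t}$ that follows from the Gagliardo characterization of $D^s$ and Minkowski in $L^2_t$. Taking expectations and using H\"older, the right-hand side reduces to a power-interpolation of $E\|f\|_{L^2_{x,t}}^2$ and $E\|D_x^sf\|_{L^2_{x,t}}^2$ with $f:=D_x^{\sigma-\epsilon}\partial_xv$. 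A direct It\^o calculation bounds these two factors respectively by $CT^2\|\Phi\|_{L_2^{0,\sigma-\epsilon+1}}^2$ and $CT^2\|\Phi\|_{L_2^{0,s+\sigma-\epsilon+1}}^2$, giving only the too-large index $\sigma-\epsilon+\tfrac32+\delta$. To reach the sharp threshold $(\tfrac12-\tfrac{\epsilon}{4})\vee\sigma$, the It\^o estimates must then be interpolated against the smoothing bound from the first step, so that the Sobolev loss of $1/2+\delta$ derivatives is balanced against the full-derivative smoothing gain; the interpolation weight $\sim\epsilon/4$ emerges as the optimizer, and careful bookkeeping simultaneously produces the required $T^2$ prefactor.

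The second estimate \eqref{LinftyxDx} then follows from \eqref{Linftyx(L2t)} by specializing to $\sigma=\epsilon=2/5$: indeed $\sigma-\epsilon=0$ (so the left-hand side becomes $\|\partial_xv\|_{L^\infty_x(L^2_t)}^2$) and $(\tfrac12-\tfrac{\epsilon}{4})\vee\sigma=(\tfrac12-\tfrac{1}{10})\vee\tfrac25=\tfrac25$, matching the stated bound. The principal obstacle is the sharp interpolation: the target index $\tfrac12-\tfrac{\epsilon}{4}$ lies strictly below the Sobolev embedding threshold $\tfrac12$, so neither a pure Sobolev argument nor a pure smoothing argument can give it in isolation; the combination, and the precise trade-off that lets the fractional $\epsilon/4$ loss appear, is where the bulk of the technical work lies.
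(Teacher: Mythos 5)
Your reduction to a $\sup_x E$ bound via local smoothing is fine, and your deduction of \eqref{LinftyxDx} from \eqref{Linftyx(L2t)} with $\sigma=\epsilon=\frac25$ matches the paper exactly. But the heart of the lemma is precisely the upgrade from $\sup_x E$ to $E\sup_x$, and that step is not actually supplied. Your Gagliardo--Nirenberg route is internally consistent as far as it goes (the pointwise bound $\|D^s h\|_{L^2_x}\le C\|D^s_x f\|_{L^2_{x,t}}$ for $h(x)=\|f(x,\cdot)\|_{L^2_t}$ does follow from the Gagliardo seminorm and the reverse triangle inequality), but as you note it lands at the index $\sigma-\epsilon+\frac32$, which overshoots by a full $\frac32$ derivatives. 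The proposed repair --- ``interpolate the It\^o estimates against the smoothing bound, the weight $\sim\epsilon/4$ emerges as the optimizer'' --- is exactly the nontrivial content of the proof and is left as a black box; moreover, the guessed mechanism is not the one that works. In particular, $\frac12-\frac{\epsilon}{4}$ does not arise as an optimized interpolation weight: it is the Sobolev index $\frac12-\frac1q$ for the specific choice $q=\frac{4}{\epsilon}$.

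What the paper actually does: fix $q=\frac{4}{\epsilon}$ and work throughout in the mixed norms $L^p_x(L^q_\omega(L^2_t))$ with $x$ \emph{outermost}, so that interpolation in the $x$-integrability is legitimate (Proposition A1 of \cite{deB_Deb_sKdV}). Lemma \ref{lem0} (local smoothing) gives $\|D^{1+\sigma}_x v\|_{L^\infty_x(L^q_\omega(L^2_t))}\le CT^{1/2}\|\Phi\|_{L_2^{0,\sigma}}$, a direct Gaussian-moment computation gives $\|D^\sigma_x v\|_{L^2_x(L^q_\omega(L^2_t))}\le CT\|\Phi\|_{L_2^{0,\sigma}}$, and interpolating with weight $\frac{\epsilon}{2}$ on the $L^2_x$ end (which is what $\frac1q=\frac12\cdot\frac{\epsilon}{2}$ forces) yields $D^{\sigma+1-\frac{\epsilon}{2}}_x v\in L^q_\omega(L^q_x(L^2_t))$. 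A separate zeroth-order bound $\|v\|_{L^q_\omega(L^q_x(L^2_t))}\le CT\|\Phi\|_{L_2^{0,\frac12-\frac1q}}$, obtained from the embedding $H^{\frac12-\frac1q}_x\subset L^q_x$, is where the index $\frac12-\frac{\epsilon}{4}$ enters. Combining, $v\in L^q_\omega(W^{\sigma+1-\frac{\epsilon}{2},q}_x(L^2_t))$, and since $q\cdot\frac{\epsilon}{2}=2>1$ the embedding $W^{\frac{\epsilon}{2},q}_x\hookrightarrow L^\infty_x$ finally converts the remaining $\frac{\epsilon}{2}$ derivatives into the $L^\infty_x$ control of $D^{1+\sigma-\epsilon}_x v$; the Hilbert transform identifies this with $D^{\sigma-\epsilon}_x\partial_x v$, and H\"older in $\omega$ passes from $L^q_\omega$ to $L^2_\omega$. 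Without this specific two-estimate interpolation and the choice $q=\frac{4}{\epsilon}$, your argument does not reach the stated threshold $(\frac12-\frac{\epsilon}{4})\vee\sigma$, so the proof as written has a genuine gap.
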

\begin{proof}
We  first prove \eqref{Linftyx(L2t)} and let $q=\frac{4}{\epsilon}$.
H\"older's inequality with respect to the expectation shows that \eqref{Linftyx(L2t)} is  a consequence
of the following estimate
\begin{equation}   \label{qinfty2}
\Big[ E\Big( \Big\{ \sup_{x\in \RR} \int_0^T |D^{\sigma - \epsilon}_x \partial_x v|^2 dt \Big\}^{\frac{q}{2}} \Big) \Big]^{\frac{2}{q}} =
\| D^{\sigma - \epsilon}_x \partial_x v\|_{L^q_\omega(L^\infty_x(L^2_t))}^2 \leq C T^2 \, \|\Phi\|^2_{L_2^{0, (\frac{1}{2}-\frac{\epsilon}{4}) \vee \sigma}}.
\end{equation}
Lemma \ref{lem0} applied with $p=2$ implies
\[ \| D^{1+\sigma}_x v\|_{L^\infty_x(L^q_\omega(L^2_t))} \leq C\, T^{\frac{1}{2}}\, \|\Phi\|_{L_2^{0,\sigma}}.
\]
We next prove that
\begin{equation}  \label{2q2}
\| D^\sigma_x v\|_{L^2_x(L^q_\omega(L^2_t))} \leq C_q T \|\Phi\|_{L^{0,\sigma}_2}.
\end{equation}
Indeed, the two previous estimates imply by interpolation (see \cite{deB_Deb_sKdV} Proposition A1) that, since $q=\frac{4}{\epsilon}$,
we have $\frac{1}{q}=\frac{1}{2}\big( 1- \big( 1-\frac{\epsilon}{2} \big) \big)$, which  yields
\[ \|D^{\sigma +1-\frac{\epsilon}{2}}_x v\|_{L^q_x(L^q_\omega(L^2_t))} \leq C
\|D^{\sigma}_x v\|_{L^2_x(L^q_\omega(L^2_t))}^{\frac{\epsilon}{2}} \;  \| D^{1+\sigma}_x v\|_{L^\infty_x(L^q_\omega(L^2_t))}^{1-\frac{\epsilon}{2}}.
\]
Thus, using the Fubini theorem, we deduce that
\begin{equation}     \label{Dqq2}
\|D^{\sigma +1-\frac{\epsilon}{2}}_x v\|_{L^q_\omega(L^q_x(L^2_t))} \leq C \, T^{\frac{1}{2} + \frac{\epsilon}{4}} \|\Phi\|_{L_2^{0,\sigma}}.
\end{equation}
To prove \eqref{2q2} using H\"older's inequality with respect to $dt$, Fubini's theorem and moments of the stochastic integral, we deduce that
for any CONS $\{e_k\}_{ k\geq 0}$ of $L^2(\RR)$, we have
\begin{align*}
\|D^\sigma_x v\|_{L^2_x(L^q_\omega(L^2_t))}^2 &
=\int_\RR \Big[ E\Big( \Big\{\int_0^T  \Big| D^\sigma_x \int_0^t S(t-s) \Phi dW(s)\Big|^2 dt\Big\}^{\frac{q}{2}} \Big) \Big]^{\frac{2}{q}} \, dx\\
&\leq T^{(\frac{q}{2}-1) \frac{2}{q}} \, \int_{\RR} \Big[ E\Big(   \int_0^T  \Big| D^\sigma_x \int_0^t S(t-s)
\Phi dW(s) \Big|^q dt\Big) \Big]^{\frac{2}{q}}\, dx\\
&\leq T^{1- \frac{2}{q}} \, \int_{\RR}\Big[ \int_0^T E\Big( \Big| D^\sigma_x \int_0^t S(t-s) \Phi dW(s) \Big|^q \Big) dt \Big]^{\frac{2}{q}}\, dx\\
&\leq C_q \, T^{1- \frac{2}{q}} \, \int_{\RR}\Big[ \int_0^T \Big| \sum_{j\geq 0} \int_0^t D^\sigma_x S(t-s) \Phi e_j |^2 ds \Big|^{\frac{q}{2}} dt
\Big]^{\frac{2}{q}}\, dx
\\
&\leq C_q  \, T^{1- \frac{2}{q}} \, T^{\frac{2}{q}} \, \int_{\RR}\Big[ \Big| \sum_{j\geq 0} \int_0^T | D^\sigma_x S(s) \Phi e_j|^2 ds\Big|^{\frac{q}{2} }
\Big]^{\frac{2}{q}}\, dx
 \\
&\leq C_q \, T\, \sum_{j\geq 0} \int_0^T \Big( \int_{\RR} |D^\sigma_x S(s) \Phi e_j|^2 dx \Big) ds
 \; \leq \;  C_q T^2 \sum_{j\geq 0} \|\Phi e_j\|_{H^\sigma_x}^2,
\end{align*}
 which   completes the proof of \eqref{2q2},  and thus, of \eqref{Dqq2}.

We next  compute an upper estimate of  $\| v\|_{L^q_\omega(L^q_x(L^2_t))}$.
Using Fubini's theorem, H\"older's inequality with respect to $dt$ and moments of
the stochastic integral, we obtain
\begin{align*}
\| v\|_{L^q_\omega(L^q_x(L^2_t))}^q &= \int_{\RR} E\Big( \Big\{ \int_0^T \Big| \int_0^t S(t-s) \Phi dW(s)\Big|^2 \, dt \Big\}^{\frac{q}{2}} \Big) \, dx\\
&\leq T^{\frac{q}{2}-1}\, \int_{\RR} E\Big(  \int_0^T \Big| \int_0^t S(t-s) \Phi dW(s)\Big|^q \, dt  \Big) \, dx
 \\
&\leq C_q \, T^{\frac{q}{2}-1}\, \int_{\RR} \Big[ \int_0^T \Big( \sum_{j\geq 0} \int_0^t |S(t-s)\Phi e_j |^2 ds \Big)^{\frac{q}{2}} dt \Big] dx
\\
& \leq  C_q \, T^{\frac{q}{2}}\, \int_{\RR} \Big( \sum_{j\geq 0} \int_0^T |S(s)\Phi e_j |^2 ds \Big)^{\frac{q}{2}}\, dx.
\end{align*}
The Sobolev embedding theorem implies that for $\tilde{\sigma}=\frac{1}{2}-\frac{1}{q}$, we have $H_x^{\tilde{\sigma}}\subset L^q_x$.
Therefore, Minkowski's inequality yields
\begin{align}    \label{qq2}
\| v\|_{L^q_\omega(L^q_x(L^2_t))}^2
&\leq  C_q \, T \Big\{ \int_{\RR} \Big( \sum_{j\geq 0} \int_0^T |S(s) \Phi e_j |^2 ds \Big)^{\frac{q}{2}} \, dx \Big\}^{\frac{2}{q}}
\nonumber \\
&\leq C_q \, T\, \sum_{j\geq 0}  \int_0^T \Big\|  |S(s) \Phi e_j|^2  \Big\|_{L^{\frac{q}{2}}_x} \, ds \nonumber  \\
&\leq C_q \, T\, \sum_{j\geq 0}  \int_0^T \big\|  S(s) \Phi e_j  \big\|_{L^{q}_x}^2 \, ds \nonumber  \\
&\leq C_q \, T^2 \, \sum_{j\geq 0} \sup_{s\in [0,T]} \big\|S(s) \Phi e_j  \big\|^2_{H^{\tilde{\sigma}}_x }  =
 C\, T^2\, \|\Phi\|_{L^{0, \frac{1}{2}-\frac{\epsilon}{4}}_2}^2.
\end{align}
The inequalities \eqref{qq2} and \eqref{Dqq2} imply that
\[ \| v \|_{L^q_\omega(W_x^{\sigma +1-\frac{\epsilon}{2},q}(L^2_t))} \leq C\, T\, \|\Phi\|_{L_2^{0, (\frac{1}{2}-\frac{\epsilon}{4})\vee \sigma}}.\]
Since $q\frac{\epsilon}{2}=2 \;   \geq 1 $, the Sobolev embedding theorem yields
$W_x^{\frac{\epsilon}{2},q}(L^2_t)\subset L^\infty_x(L^2_t)$; thus,
$D^{1+\sigma-\epsilon}_x v\in L^q_\omega(L^\infty_x(L^2_t))$ and
\begin{equation}   \label{D1+sigma-epsilon}
 \|D^{1+\sigma-\epsilon}_x v\|_{L^q_\omega(L^\infty_x(L^2_t))} \leq C\, T\, \|\Phi\|_{L_2^{0, (\frac{1}{2}-\frac{\epsilon}{4})\vee \sigma}}.
 \end{equation}
Finally,
\[ D^{\sigma -\epsilon}_x \partial_x v= \int_0^t D^{\sigma-\epsilon}_x \partial_x S(t-s)\Phi dW(s) =  \int_0^t D^{1+ \sigma-\epsilon}_xS(t-s)
 {\mathcal H} \Phi dW(s),\]
where ${\mathcal H}$ denotes the Hilbert transform. Thus, we obtain
\[ \| D^{\sigma - \epsilon}_x \partial_x v\|_{ L^q_\omega(L^\infty_x(L^2_t))} \leq C\, T\, \|{\mathcal H} \Phi\|_{L_2^{0, (\frac{1}{2}-\frac{\epsilon}{4})
\vee \sigma}} \leq   C\, T\,  \|\Phi\|_{L_2^{0, (\frac{1}{2}-\frac{\epsilon}{4})
\vee \sigma}}.\]
This completes the proof of \eqref{qinfty2},  and therefore, of \eqref{Linftyx(L2t)}.

To prove \eqref{LinftyxDx}, let $\sigma=\epsilon=\frac{2}{5}$. Then $\frac{1}{2}-\frac{\epsilon}{4} =\sigma$ and
 \eqref{D1+sigma-epsilon} completes the proof.
\end{proof}

\begin{lemma}    \label{q<p-Lqx(Lpt)}
Let $p,q$ be such that $2\leq q\leq p<\infty$ and $\gamma \geq 0$. There exists a constant $C>0$ such that
\begin{equation}       \label{Lqx(Lpt)q<p}
E\|D_x^\gamma v\|^q_{L^q_x(L^p_t)} \leq C\, T^{\frac{q}{p}+\frac{q}{2}} \, \|\Phi\|^q_{L_2^{0, \gamma + \frac{1}{2} - \frac{1}{q}}}.
\end{equation}
\end{lemma}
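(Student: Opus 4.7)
The plan is to exploit the fact that $q\leq p$ to pull expectations inside via Jensen's inequality, reduce to second moments using Gaussianity of $v(t,x)$, and then swap the spatial $L^q_x$ and temporal integrations via Minkowski's inequality. The Sobolev embedding $H^{1/2-1/q}(\RR)\hookrightarrow L^q(\RR)$ will produce exactly the extra derivative $1/2-1/q$ in the statement.

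\medskip

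\noindent\textbf{Step 1: Jensen's inequality.}  By Fubini,
$$E\|D_x^\gamma v\|_{L_x^q(L_t^p)}^q=\int_\RR E\|D_x^\gamma v(\cdot,x)\|_{L_t^p}^q\, dx.$$
Since $q/p\leq 1$, concavity of $y\mapsto y^{q/p}$ gives $E\|D_x^\gamma v(\cdot,x)\|_{L_t^p}^q\leq \bigl(E\|D_x^\gamma v(\cdot,x)\|_{L_t^p}^p\bigr)^{q/p}$, and Fubini again yields
$$E\|D_x^\gamma v(\cdot,x)\|_{L_t^p}^p=\int_0^T E|D_x^\gamma v(t,x)|^p\, dt.$$

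\medskip

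\noindent\textbf{Step 2: Gaussian moment bound and change of variable.}  For fixed $(t,x)$, $D_x^\gamma v(t,x)$ is a centered Gaussian random variable (since $D_x^\gamma$ commutes with the Fourier multiplier $S(t)$), so for the CONS $\{e_j\}$,
$$E|D_x^\gamma v(t,x)|^p\leq C_p\Bigl(\sum_{j\geq 0}\int_0^t|D_x^\gamma S(t-s)\Phi e_j(x)|^2\, ds\Bigr)^{p/2}\leq C_p\Bigl(\sum_{j\geq 0}\int_0^T|D_x^\gamma S(s)\Phi e_j(x)|^2\, ds\Bigr)^{p/2},$$
where I used the substitution $s\mapsto t-s$ and extended the interval to $[0,T]$. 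Integrating in $t\in[0,T]$ produces a factor $T$, and raising to the power $q/p$ yields
$$\bigl(E\|D_x^\gamma v(\cdot,x)\|_{L_t^p}^p\bigr)^{q/p}\leq C\,T^{q/p}\Bigl(\sum_{j\geq 0}\int_0^T|D_x^\gamma S(s)\Phi e_j(x)|^2\, ds\Bigr)^{q/2}.$$

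\medskip

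\noindent\textbf{Step 3: Two successive applications of Minkowski's inequality.}  Writing $g_j(s,x):=|D_x^\gamma S(s)\Phi e_j(x)|^2$, I must bound
$$\int_\RR\Bigl(\sum_{j\geq 0}\int_0^T g_j(s,x)\, ds\Bigr)^{q/2}dx.$$
Since $q/2\geq 1$, Minkowski on the sum over $j$ gives
$$\Bigl[\int_\RR\Bigl(\sum_{j\geq 0}\int_0^T g_j(s,x)\, ds\Bigr)^{q/2}dx\Bigr]^{2/q}\leq \sum_{j\geq 0}\Bigl[\int_\RR\Bigl(\int_0^T g_j(s,x)\, ds\Bigr)^{q/2}dx\Bigr]^{2/q},$$
and a second Minkowski swap of $\int_0^T ds$ and $\|\cdot\|_{L_x^{q/2}}$ gives
$$\Bigl[\int_\RR\Bigl(\int_0^T g_j(s,x)\, ds\Bigr)^{q/2}dx\Bigr]^{2/q}\leq \int_0^T \|D_x^\gamma S(s)\Phi e_j\|_{L_x^q}^2\, ds.$$

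\medskip

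\noindent\textbf{Step 4: Sobolev embedding and conclusion.}  Since $q\geq 2$, the embedding $H_x^{1/2-1/q}\hookrightarrow L_x^q$ together with the facts that $S(s)$ is an $H^\sigma_x$-isometry and $\|D_x^\gamma u\|_{H^{1/2-1/q}_x}\leq \|u\|_{H^{\gamma+1/2-1/q}_x}$ yields
$$\int_0^T\|D_x^\gamma S(s)\Phi e_j\|_{L_x^q}^2\, ds\leq C\,T\,\|\Phi e_j\|_{H_x^{\gamma+1/2-1/q}}^2.$$
Summing over $j$ gives $C\,T\,\|\Phi\|_{L_2^{0,\gamma+1/2-1/q}}^2$, hence
$$\int_\RR\Bigl(\sum_{j\geq 0}\int_0^T g_j(s,x)\, ds\Bigr)^{q/2}dx\leq C\,T^{q/2}\,\|\Phi\|_{L_2^{0,\gamma+1/2-1/q}}^q.$$
Combining with Steps 1--2 yields the desired $T^{q/p+q/2}\|\Phi\|_{L_2^{0,\gamma+1/2-1/q}}^q$ bound.

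\medskip

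I expect the only mildly delicate point to be the bookkeeping of the two Minkowski swaps together with the use of Jensen in the correct direction; these are the steps where errors in the exponents are easy to make but the structure is standard.
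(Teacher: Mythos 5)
Your proof is correct and follows essentially the same route as the paper: Jensen/H\"older in $\omega$ to reduce to $p$-th Gaussian moments, the change of variables $s\mapsto t-s$, Minkowski's inequality to bring the sum over $j$ outside the $L^{q/2}_x$-norm, and the Sobolev embedding $H^{1/2-1/q}_x\subset L^q_x$ combined with the isometry property of $S(s)$. The only immaterial difference is in the middle bookkeeping: where you use the integral Minkowski inequality to swap $\int_0^T ds$ with the $L^{q/2}_x$-norm (yielding $T^{q/2}$ directly), the paper instead applies H\"older in $s$ (costing $T^{q/2-1}$) followed by Fubini and picks up the remaining factor $T$ at the end; both give the same total power $T^{q/p+q/2}$.
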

\begin{proof}
Fubini's theorem and H\"older's inequality with respect to $dt$   prove that
\[  E\|D_x^\gamma v\|^q_{L^q_x(L^p_t)} = \| D_x^\gamma v\|^q_{L^q_x(L^q_\omega(L^p_t))} \leq \|D^\gamma v\|_{L^q_x(L^p_\omega (L^p_t))}
= \|D^\gamma v\|^q_{L^q_x(L^p_t (L^p_\omega))}.
\]
Hence, \eqref{Lqx(Lpt)q<p} can be obtained from the following estimate
\begin{equation}    \label{qpp}
\|D^\gamma_x v\|^q_{L^q_x(L^p_t (L^p_\omega))}
 \leq C\, T^{\frac{q}{p}+\frac{q}{2}} \, \|\Phi\|^q_{L_2^{0,\gamma + \frac{1}{2} - \frac{1}{q}}}.
\end{equation}
Moments of the stochastic integral, a change of variables  and H\"older's inequality with respect  to $ds$
 imply that for the CONS $\{e_j\}_{j\geq 0}$ of $L^2(\RR)$ in the definition of $W$, we have
\begin{align*}
\|D^\gamma v\|^q_{L^q_x(L^p_t (L^p_\omega))}
& = \int_\RR \Big| \int_0^T E\Big( \Big| D^\gamma_x \int_0^t  S(t-s) \Phi dW(s)\Big|^p \Big) dt \Big|^{\frac{q}{p}} dx \\
&=C_p \int_\RR \Big( \int_0^T  \Big| \sum_{j\geq 0} \int_0^t |D^\gamma_x S(t-s) \Phi e_j|^2 ds \Big|^{\frac{p}{2}} dt \Big)^{\frac{q}{p}} dx \\
&\leq C_p \int_\RR \Big( \int_0^T  \Big| \sum_{j\geq 0} \int_0^T |D^\gamma_x S(s) \Phi e_j|^2 ds \Big|^{\frac{p}{2}} dt \Big)^{\frac{q}{p}} dx \\
& \leq C_p \int_\RR \, T^{\frac{q}{p}} \, \Big( \int_0^T \sum_{j\geq 0} |D^\gamma_x S(s) \Phi e_j|^2 ds \Big)^{\frac{q}{2}} \, dx \\
&\leq C_p \,  T^{\frac{q}{p}}\, T^{(1-\frac{2}{q})\frac{q}{2}} \, \int_\RR \Big( \int_0^T
\Big|\sum_{j\geq 0} |D^\gamma_x S(s) \Phi e_j|^2 \Big|^{\frac{q}{2}} ds\Big) dx .
\end{align*}
Using the Fubini theorem and  then the Minkowski inequality, we deduce
   \begin{align*}
 \int_\RR \Big( &\int_0^T  \Big|\sum_{j\geq 0} |D^\gamma_x S(s) \Phi e_j|^2 \Big|^{\frac{q}{2}} ds\Big) dx
=   \int_0^T \Big\| \sum_{j\geq 0} |D^\gamma_x S(s)\Phi e_j|^2 \|_{L^{\frac{q}{2}}_x}^{\frac{q}{2}} \, ds\\
&\leq   \int_0^T \Big( \sum_{j\geq 0}
\| \, D^{\gamma}_x S(s) \Phi e_j|^2 \, \|_{L^{\frac{q}{2}}_x} \Big)^{\frac{q}{2}} ds 
 =  \int_0^T \Big( \sum_{j\geq 0}  \|  D^{\gamma}_x S(s) \Phi e_j  \|^2_{L^q_x} \Big)^{\frac{q}{2}} ds \\
&\leq C  \int_0^T \Big( \sum_{j\geq 0}  \|  D^{\gamma}_x S(s) \Phi e_j  \|^2_{H^\sigma_x} \Big)^{\frac{q}{2}} ds 
= C \, T\,  \|\Phi\|_{L_2^{0, \gamma + \frac{1}{2} -\frac{1}{q}}}^q,
\end{align*}
where in the last line we use  the Sobolev embedding
$H^\sigma_x \subset L^q_x$ for $\sigma = \frac{1}{2} - \frac{1}{q}$.
This completes the proof.
\end{proof}

Finally, in the case of the stochastic mKdV equation, we have to prove a result similar to Proposition 3.2 in \cite{deB_Deb_sKdV}. However,  we have to estimate the  $L^4_x(L^\infty_t)$ norm instead of the $L^2_x(L^\infty_t)$; this
requires a stronger condition on the operator $\Phi$ which has to be in $L^{0,1+\epsilon}_2$ for some positive $\epsilon$.
\begin{lemma}     \label{lem-4xinftyt}
Let $\Phi \in L^{0,1+\epsilon}_2$ for some positive $\epsilon$. Then $v\in L^4_\omega(L^4_x(L^\infty_t))$ and there exists a positive constant
$C$ such that
\begin{equation}     \label{4xinftyt}
E\Big( \int_{\RR}\;  \sup_{t\in [0,T]} \, \Big| \int_0^t S(t-s) \Phi dW(s) \Big|^4 \; dx\Big) \leq C \,   (T+T^{4})   \, \|\Phi\|_{L^{0,1+\epsilon}_2}^4.
\end{equation}
\end{lemma}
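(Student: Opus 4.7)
The plan is to adapt the factorization (stochastic Fubini) method of Da Prato--Zabczyk to the Airy propagator in order to trade the stochastic integral for a deterministic convolution in time, whose temporal supremum is easier to handle. Fix $\alpha\in(1/4,1/2)$; this window ensures that the H\"older integral in the first step and the Gaussian moment in the last step are both finite. Using the Euler Beta identity $\int_s^t(t-\tau)^{\alpha-1}(\tau-s)^{-\alpha}d\tau=\pi/\sin(\pi\alpha)$ and a stochastic Fubini argument, I would write
\[
v(t)=\tfrac{\sin(\pi\alpha)}{\pi}\int_0^t (t-s)^{\alpha-1}S(t-s)Y_\alpha(s)\,ds,\qquad Y_\alpha(s):=\int_0^s(s-r)^{-\alpha}S(s-r)\Phi\,dW(r),
\]
noting that for $\alpha<1/2$ and $\Phi\in L^{0,1+\epsilon}_2$ the random variable $Y_\alpha(s)$ is a centered $H^{1+\epsilon}_x$-valued Gaussian.

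Next I would replace the temporal supremum of $v$ by a spatial maximal quantity. Since $(\alpha-1)\cdot 4/3>-1$, H\"older's inequality with conjugate exponents $(4/3,4)$ applied to the $ds$-integral, combined with the crude pointwise bound $|S(t-s)Y_\alpha(s)(x)|\leq \sup_{\tau\in[0,T]}|S(\tau)Y_\alpha(s)(x)|$, yields
\[
\sup_{t\in[0,T]}|v(t,x)|^4\leq C_\alpha T^{4\alpha-1}\int_0^T\sup_{\tau\in[0,T]}|S(\tau)Y_\alpha(s)(x)|^4\,ds.
\]
Integrating in $x$ and taking expectation reduces the proof to estimating $E\|\sup_\tau|S(\tau)Y_\alpha(s)|\|_{L^4_x}^4$.

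The analytic heart of the proof, and the main obstacle, is the required Airy maximal estimate of Kenig--Ponce--Vega type in the form
\[
\Big\|\sup_{\tau\in[0,T]}|S(\tau)f|\Big\|_{L^4_x}\leq C\,(1+T^{1/2})\,\|f\|_{H^{1+\epsilon}_x}.
\]
Unlike the endpoint $L^2_x(L^\infty_t)$ maximal bound used in \cite{deB_Deb_sKdV} for KdV (which asks only for $H^{3/4+}_x$ regularity), the $L^4_x$-maximal estimate demands strictly more regularity of $f$; this is precisely what forces the hypothesis $\Phi\in L^{0,1+\epsilon}_2$ in the lemma. I would obtain this by interpolation between the pointwise Sobolev-type bound $\|S(\tau)f\|_{L^\infty_x}\leq C\|f\|_{H^{1/2+\delta}_x}$ (the $H^s_x$-isometry of $S$ combined with $H^{1/2+\delta}_x\hookrightarrow L^\infty_x$) and the KPV $L^2_x(L^\infty_t)$ maximal estimate at regularity $H^{3/4+\delta}_x$.

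Finally, Gaussian hypercontractivity and It\^o's isometry applied to $Y_\alpha(s)$ give
\[
E\|Y_\alpha(s)\|_{H^{1+\epsilon}_x}^4\leq 3\Big(E\|Y_\alpha(s)\|_{H^{1+\epsilon}_x}^2\Big)^2=3\Big(\frac{s^{1-2\alpha}}{1-2\alpha}\Big)^2\|\Phi\|^4_{L^{0,1+\epsilon}_2}.
\]
Substituting these ingredients back and computing $\int_0^T s^{2-4\alpha}ds=T^{3-4\alpha}/(3-4\alpha)$, the $\alpha$-dependent powers of $T$ collapse to a clean $T^2$. Multiplied by the $(1+T^{1/2})^4$ factor from the maximal estimate, this produces a bound of the form $C(T^2+T^4)\|\Phi\|^4_{L^{0,1+\epsilon}_2}$, which by the elementary inequality $T^2\leq T+T^4$ (split into $T\leq 1$ and $T\geq 1$) absorbs into the claimed $C(T+T^4)\|\Phi\|^4_{L^{0,1+\epsilon}_2}$.
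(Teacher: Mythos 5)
Your argument is correct, but it takes a genuinely different route from the paper's. The paper localizes in frequency with a Littlewood--Paley partition (the blocks $S_k$, $\Phi_k$), controls the time supremum on each block through the embedding $W^{\alpha,p}_t\subset L^\infty_t$ --- hence through H\"older-in-time increments of the Gaussian process, estimated via the explicit Kenig--Ponce--Vega kernels $H_k^T$ --- and then resums the blocks with a $2^{\epsilon k}$ weight; that summation is precisely where the hypothesis $\Phi\in L^{0,1+\epsilon}_2$ is consumed. You instead use the Da Prato--Zabczyk factorization to decouple the stochastic integration from the supremum in time: the probabilistic input reduces to fourth moments of the auxiliary Gaussian $Y_\alpha(s)$ (hypercontractivity plus the It\^o isometry), and all of the harmonic analysis is concentrated in a single deterministic maximal estimate for the Airy group in $L^4_x(L^\infty_t)$. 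Your bookkeeping is right: $\alpha\in(1/4,1/2)$ makes both $\int_0^t(t-s)^{4(\alpha-1)/3}\,ds$ and $\int_0^s(s-r)^{-2\alpha}\,dr$ finite, the stochastic Fubini step is legitimate for Hilbert--Schmidt $\Phi$ via the semigroup identity $S(t-s)S(s-r)=S(t-r)$, and the powers of $T$ recombine to $T^{4\alpha-1}\cdot T^{3-4\alpha}=T^2\le T+T^4$. Your approach is more modular and, I would say, cleaner. One factual remark: your claim that the $L^4_x$ maximal estimate ``demands strictly more regularity'' than the $L^2_x(L^\infty_t)$ one is backwards. Theorem 2.7 of Kenig--Ponce--Vega gives $\big\|\sup_{t}|S(t)f|\big\|_{L^4_x}\le C\|D_x^{1/4}f\|_{L^2_x}$ globally in time --- this is the very estimate behind $\mu_5^T(S(\cdot)u_0)\le c\,\|u_0\|_{H^{1/4}_x}$ in the local theory --- so only $1/4$ of a derivative is needed there; the $3/4+$ threshold belongs to the $L^2_x(L^\infty_t)$ estimate. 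For your purposes you do not even need operator interpolation: the elementary inequality $\|g\|_{L^4_x}\le\|g\|_{L^2_x}^{1/2}\|g\|_{L^\infty_x}^{1/2}$ applied to $g=\sup_\tau|S(\tau)f|$ already yields the maximal bound at regularity $H^{3/4+\delta}_x$, comfortably below $1+\epsilon$, so your proof closes. Better still, running your factorization argument with the sharp KPV bound would prove the lemma under the weaker hypothesis $\Phi\in L^{0,1/4}_2$, which suggests that the $1+\epsilon$ regularity in the statement is an artifact of the paper's method rather than intrinsic to the estimate.
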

\begin{proof} The proof is  based on results from the proof  of Proposition 3.2 in \cite{deB_Deb_sKdV}. We send to this
reference for   some intermediate results.   Let $\{e_j\}_{ j\geq 0}$ be the CONS of $L^2(\RR)$ in the definition of $W$.
Let $\{\psi_k\}_{ k\geq 0}$ denote a partition of unity
such that
\[ \mbox{\rm supp } \psi_0\subset [-1,+1],\quad
\mbox{\rm supp } \psi_k\subset [2^{k-2},2^k] \; 
,\quad  \psi_k(\xi)=\psi_1\Big( \frac{\xi}{2^{k-1}}\Big) \; \mbox{\rm for }\xi\geq 0, \,  k\geq 1.
\]
Let $\tilde{\psi}_k \in C^\infty_0(\RR)$ satisfy $\tilde{\psi}_k\geq 0$, $\tilde{\psi}_k=1$ on the support of $ \psi_k$,  and
$\mbox{\rm supp } \tilde{\psi}_k \subset [2^{k-3}, 2^{k+1}]$. For $k\in \N$ let $S_k(t)$ and $\Phi_k$ be defined by
\begin{align*}  
\widehat{S_k(t) u}(\xi) &= \psi_k(|\xi|) \; \widehat{S(t) u}(\xi) = e^{it\xi^3} \; \psi_k(|\xi|)\;  \hat{u}(\xi),\\
\widehat{\Phi_k e_j}(\xi)&= \tilde{\psi}_k(|\xi|) \; \widehat{\Phi e_j}(\xi), \quad j\in \N.
\end{align*}
Then $S_k(t)\Phi = S_k(t) \Phi_k$, $k\in \N$  and $S(t) \Phi = \sum_{k\geq 0} S_k(t) \Phi_k$.
We prove that for every $k\in \N$ and  $ \epsilon \in (0,1)$, 
\begin{equation}    \label{estim_infty_t_k}
E\Big( \int_{\RR} \, \sup_{t\in [0,T]} \; \Big| \int_0^t S_k(t-s) \Phi_k dW(s)\Big|^4\; dx \Big) \leq C\, (T+T^{4})\, 2^{\epsilon k}\,
 \Big( \sum_{j\in \N} \|\Phi_k e_j\|_{H^{1+\frac{\epsilon}{2}}_x}^2\Big)^2.
\end{equation}
Suppose that \eqref{estim_infty_t_k} holds. Then using the Minkowski and Cauchy-Schwarz inequalities,  we deduce that
\begin{align*}
\Big\{ E\Big( \int_{\RR}& \, \sup_{t\in [0,T]} \; \Big| \int_0^t S(t-s) \Phi dW(s)\Big|^4\; dx \Big) \Big\}^{\frac{1}{4}} \\
& = \Big\{ E\Big( \int_{\RR} \, \sup_{t\in [0,T]} \;  \Big| \sum_{k\in \N}  \int_0^t S_k(t-s) \Phi_k dW(s)\Big|^4\; dx \Big)\Big\}^{\frac{1}{4}} \\
&\leq \sum_{k\in \N} \, \Big\{ E\Big( \int_{\RR} \,   \sup_{t\in [0,T]} \; \Big| \int_0^t S_k(t-s) \Phi_k dW(s)\Big|^4\; dx \Big) \Big\}^{\frac{1}{4}}  \\
&\leq C \, (T+T^4)^{\frac{1}{4}} \, \sum_{k\in \N} 2^{\frac{\epsilon k}{4} }\; \Big( \sum_{j\in \N}
\|\Phi_k e_j \|_{H^{1+\frac{\epsilon}{2}}_x}^2\Big)^{\frac{1}{2}}\\
&\leq C\,  (T+T^4)^{\frac{1}{4}}  \, \Big( \sum_{k\in \N} 2^{-\frac{\epsilon k}{2}} \Big)^{\frac{1}{2}}
\Big( \sum_{k\in \N} 2^{\epsilon k}
\Big\{ \sum_{j\in \N}  \|\Phi_k e_j \|_{H^{1+\frac{\epsilon}{2}}_x}^2\ \Big\} \Big)^{\frac{1}{2}} \\
& \leq C \; (T+T^4)^{\frac{1}{2}}
\| \Phi_k\|_{L^{0,1+\epsilon}_2};
\end{align*}
the last inequality is obtained from  the upper estimate 
$ \sum_{k\in \N}  2^{\epsilon k} \|\Phi_k \varphi\|_{H^{1+\frac{\epsilon}{2}}_x}^2 \leq C\,  \|\Phi_k \varphi\|_{H^{1+\epsilon}_x}^2$
for every $\varphi\in L^2_x$.

We next prove \eqref{estim_infty_t_k}. Let  $\alpha>0$ to be chosen later,  and $p\geq 4$ such that
 $\alpha p>1$. The Sobolev embedding implies that
$W_t^{\alpha,p}\subset L^\infty_t$; hence, using Fubini's theorem we obtain
\begin{equation}    \label{I1+I2}
E\Big( \int_{\RR} \, \sup_{t\in [0,T]} \; \Big| \int_0^t S_k(t-s) \Phi_k dW(s)\Big|^4\; dx \Big) \leq C (I_1+I_2),
\end{equation}
where
\begin{align*}
I_1&= \int_{\RR} E\Big(  \Big\{ \int_0^T \int_0^T \frac{  \big| \int_0^t S_k(t-s) \Phi_k dW(s) -  \int_0^{t'} S_k(t'-s) \Phi_k dW(s) \big|^p }
{|t-t'|^{1+\alpha p}} \, dt\, dt' \Big\}^{\frac{4}{p}}\Big)  dx, \\
I_2&=\int_{\RR} E\Big( \Big\{ \int_0^T \Big|  \int_0^t S_k(t-s) \Phi_k dW(s)\Big|^p dt \Big\}^{\frac{4}{p}} \Big) dx.
\end{align*}
To upper estimate $I_2$, we use H\"older's inequality with respect to the expected value, Fubini's theorem, moments of Gaussian variables
and   Minkowski's    inequality with respect to $dt$ and $dx$;
this yields
\begin{align*}
I_2&\leq \int_{\RR} \Big\{ E\Big( \int_0^T \Big|  \int_0^t S_k(t-s) \Phi_k dW(s)\Big|^p dt\Big) \Big\}^{\frac{4}{p}} dx\\
& \leq C_p  \int_{\RR} \Big\{ \int_0^T \Big| \sum_{j\in \N} \int_0^t \big| S_k(t-s) \Phi_k e_j\big|^2 ds \Big|^{\frac{p}{2}} dt \Big\}^{\frac{4}{p}} dx\\
&\leq C_p \int_{\RR} \Big\{  \sum_{j\in \N} \Big[ \int_0^T \Big| \int_0^T \big| S_k(s) \Phi_k e_j\big|^2 ds \Big|^{\frac{p}{2}} dt \Big]^{\frac{2}{p}}
\Big\}^2 dx\\
&\leq C_p T^{\frac{4}{p}} \int_{\RR} \Big\{  \sum_{j\in \N}  \Big| \int_0^T \big| S_k(s) \Phi_k e_j\big|^2 ds \Big\}^2 dx\\
&\leq C_p T^{2+\frac{4}{p}}  \Big\{  \sum_{j\in \N}  \sup_{s\in [0,T]} \| | S_k(s) \Phi_k e_j|^2 \|_{L^2_x} \Big\}^2\\
&\leq  C_p T^{2+\frac{4}{p}}  \Big\{  \sum_{j\in \N}  \sup_{s\in [0,T]} \|  S_k(s) \Phi_k e_j \|_{L^4_x}^2 \Big\}^2\\
&\leq  C T^{2+\frac{4}{p}}  \Big\{  \sum_{j\in \N}  \sup_{s\in [0,T]} \|  S_k(s) \Phi_k e_j \|_{H^\sigma_x}^2 \Big\}^2,
\end{align*}
where the last inequality can be deduced from the inclusion $H^\sigma_x\subset L^4_x$ for $\sigma > \frac{1}{4}$ to be chosen later.

\begin{remark}
This is the place where the significant difference with the stochastic KdV case in \cite{deB_Deb_sKdV} arises. Indeed,
to deal with the higher  power of   nonlinearity,  the functional space here is $L_x^4(L^\infty_t)$  instead of $L_x^2(L^\infty_t)$.
\end{remark} 

 Using Theorem 2.7 in \cite{KPV_JAMS}, we  first consider the homogeneous part of the $H^\sigma_x$-norm
(denoted by $\dot{H}^\sigma_x$). For   $\tau > \frac{3}{4}$, if $\sigma = \tau - \frac{1}{2}$,  we obtain
\begin{equation}  \label{upperI2}
  \Big\{  \sum_{j\in \N}  \sup_{s\in [0,T]} \|  S_k(s) \Phi_k e_j \|_{\dot{H}^\sigma_x}^2 \Big\}^2
   \leq C   \Big\{  \sum_{j\in \N}  \| D_x^\sigma \Phi_k e_j \|_{H^\tau_x}^2 \Big\}^2 \leq C
\|\Phi_k\|_{L^{0,\sigma+\tau}_2}^4.
\end{equation}
The $L^2_x$ part of the $H^\sigma_x$-norm obviously satisfies the same final upper bound.

To upper estimate $I_1$, we use H\"older's inequality with respect to the expected value and Fubini's theorem,
\[ 
I_1\leq \int_{\RR} \Big\{ \int_0^T \int_0^T \frac{ E\big( \big| \int_0^t S_k(t-s) \Phi_k dW(s) -  \int_0^{t'} S_k(t'-s) \Phi_k dW(s) \big|^p\big)  }
{|t-t'|^{1+\alpha p}} \, dt\, dt' \Big\}^{\frac{4}{p}} dx\; .
\]
Since the stochastic integral is Gaussian, for $t \leq t'$ we have
\begin{align*}
 E\Big( &\Big| \int_0^t S_k(t-s) \Phi_k dW(s) -  \int_0^{t'} S_k(t'-s) \Phi_k dW(s)\Big|^p \Big) \\
 & = C_p
\Big|\sum_{j\in \N}  \int_0^t \big| S_k(t-s) \Phi_k  e_j -  S_k(t'-s) \Phi_k e_j|^2 ds \Big|^{\frac{p}{2}}
+ C_p  \Big| \int_t^{t'} \sum_{j\in \N} \big| S_k(t'-s) \Phi_k e_j  |^2 ds \Big|^{\frac{p}{2}} .
\end{align*}
 In the double time integral  we first   consider the case $|t-t'| 2^{\gamma k} \leq 1$ for $\gamma>0$ to be chosen later on.
Using parts of the proof of Proposition 3.1 pages 228-229  in \cite{deB_Deb_sKdV} based on computations from \cite{KPV_JAMS}, we deduce that
for $k,j\in \N$  and  $0\leq t\leq t'\leq T$, we obtain
\begin{align*}
 & \int_0^t \big| S_k(t-s) \Phi_k  e_j -  S_k(t'-s) \Phi_k e_j|^2 ds \leq C \big( |t-t'| 2^{3k} + |t-t'|^2 2^{5k}\big)^2 \big( H_k^T \ast |\Phi_k e_j|\big)^2,\\
 &\int_t^{t'} \big| S_k(t'-s) \Phi_k e_j|^2 ds \leq C |t'-t| \big( H_k^T \ast |\Phi_k e_j|\big)^2,
 \end{align*}
where for $k\geq 1$  (resp. $k=0$)  we let
\begin{align*}
H_k^T(x)&= 2^{k-1}\; \mbox{\rm for } |x|\leq C_1 (T+1), \;
H_k^T(x)=\frac{  2^{\frac{k-1}{2}}}{  |x|^{\frac{1}{2}}} \; \mbox{\rm for }  C_1 (T+1) < |x|\leq C_2(T+1) 2^{2(k-1)},\\
H_k^T(x)&= \frac{1}{1+x^2} \; \mbox{\rm for }  |x| >  C_2(T+1) 2^{2(k-1)},\\
H_0^T(x)&= 1 \; \mbox{\rm for } |x|\leq C_1 (T+1), \quad H_0^T(x)= \frac{1}{1+x^2} \; \mbox{\rm for } |x| > C_1 (T+1).
\end{align*}
Hence,  we deduce that for $k \in \N$ and $0\leq t\leq t'\leq T$, we get
\begin{align*}
 J_{k}(t,t')& : = E\Big( \Big| \int_0^t  S_k(t-s) \Phi_k dW(s) -  \int_0^{t'} S_k(t'-s) \Phi_k dW(s)\Big|^2 \Big) \\
 & \leq C \big( |t-t'| +
|t-t'|^2 2^{6k} + |t-t'|^4 2^{10k}\big) \; \sum_{j\in \N} \big( H_k^T \ast |\Phi_k e_j|\big)^2.
\end{align*}
Fix $\epsilon \in (0,1)$;  choose  $\gamma>\frac{9}{2} $  and $\alpha <\frac{1}{8}$ such that $\alpha \gamma < \frac{\epsilon}{8}$.
Note that for $\epsilon \in (0,1)$ we have $\alpha <\frac{1}{36}$; thus, $p>36$. Then for $|t-t'| 2^{\gamma k} \leq 1$, we obtain
\begin{align*}
 J_{k}(t,t') & \leq C \, |t-t'|^{4\alpha} 2^{(-3+4\alpha \gamma) k} \Big( 2^{k(-\gamma +3) } + 2^{k(-2\gamma +9)} + 2^{k(-4\gamma +13)} \Big) \;
\sum_{j\in \N} \big( H_k^T \ast |\Phi_k e_j|\big)^2\\
&\leq C \, |t-t'|^{4\alpha}\,  2^{(-3+ \frac{\epsilon}{2} ) k} \;  \sum_{j\in \N} \big( H_k^T \ast |\Phi_k e_j|\big)^2.
\end{align*}
A direct computation shows that for $k,j\in \N$ and $0\leq t\leq t'\leq T$ such that $|t-t'|\, 2^{\gamma k}>1$, we get
\begin{align*}
 J_k (t,t')&\leq 2\sum_{j\in\N} \Big[ \int_0^t \big| S_k(t-s) \Phi_k e_j|^2 ds +  \int_0^{t'} \big| S_k(t'-s) \Phi_k e_j|^2 ds \Big]\\
 &\leq 4 |t-t'|^{4\alpha} \, 2^{\frac{\epsilon}{2}  k} \sum_{j\in\N} \int_0^T \big| S_k(s) \Phi_k e_j|^2 ds.
\end{align*}
The above upper estimates and Minkowski's inequality with respect to $dx$ imply
\begin{align*}
I_1 \leq &\;  C\, 2^{\epsilon k}\, 2^{-6k} \int_{\RR}
 \Big\{ \int_0^T \int_0^T 1_{\{ |t-t'| 2^{\gamma k}\leq 1\} } \frac{1}{|t-t'|^{1-\alpha p}  } \, dt\,   dt' \Big\}^{\frac{4}{p}} \;
  \Big(  \sum_{j\in \N} \big( H_k^T \ast |\Phi_k e_j|\big)^2\Big)^2 \, dx \\
&\; + C \, 2^{\epsilon k}\, \int_{\RR} \Big\{ \int_0^T \int_0^T 1_{\{ |t-t'| 2^{\gamma k}>  1\} } \frac{1}{|t-t'|^{1-\alpha p}  } \, dt\,   dt' \Big\}^{\frac{4}{p}} \;
\Big( \sum_{j\in \N}  \int_0^T \big| S_k(s) \Phi_k e_j|^2 ds \Big)^2\, dx\\
\leq &\;     C\,  2^{\epsilon k} \,
\Big[2^{-6k}\, T  \Big( \sum_{j\in \N} \big\|    H_k^T \ast |\Phi_k e_j| \big\|_{L^4_x}^2\Big)^2  +
T^2\,  \Big( \sum_{j\in \N} \sup_{s\in [0,T]}  \big\| S_k(s) \Phi_k e_j  \big\|_{L^4_x}^2 \Big)^2 \Big].
\end{align*}
Young's inequality yields
\[ \big\|    H_k^T \ast |\Phi_k e_j| \big\|_{L^4_x} \leq C \|H_k^T\|_{L^{\frac{4}{3}}_x} \, \|\Phi_k e_j\|_{L^2_x}.\]
Furthermore, using the explicit definition of $H^T_k$, we deduce
\[  \big\|    H_k^T \big\|_{L^{\frac{4}{3}}_x}^{\frac{4}{3}} \leq C\, (1+T) \, 2^{\frac{4}{3} k},
 \]
 which implies
\[ \Big( \sum_{j\in \N} \big\|    H_k^T \ast |\Phi_k e_j| \big\|_{L^4_x}^2\Big)^2 \leq C\, \big(1+T^3 \big)\, 2^{4k} \|\Phi_k\|_{L^{0,0}_2}^4.
\]
The upper estimate \eqref{upperI2} implies that  for  $\tau >\frac{3}{4}$ and $\sigma = \tau - \frac{1}{2}>\frac{1}{4}$, we have
\begin{align}   \label{upperI1}
I_1 &\leq C\, 2^{\epsilon k}\, \Big[  (T+T^4)\,
2^{-2k} \|\Phi_k\|_{L^{0,0}_2}^4 + T^2  \|\Phi_k\|_{L^{0,\sigma+\tau}_2}^4\Big]
\leq C \,   (T+T^4)\, 2^{\epsilon k}\, \|\Phi_k\|_{L^{0,\sigma+\tau}_2}^4  .
\end{align}
Since $p>36$, choosing $\sigma$ and $\tau$ such that $\sigma +\tau \leq 1+\frac{\epsilon}{2}$,
the inequalities \eqref{I1+I2}-\eqref{upperI1} conclude the proof of \eqref{estim_infty_t_k},   and thus of the lemma.
\end{proof}

In order to prove the existence of a local solution to \eqref{mild_gKdV}, we  first estimate moments of functional norms
$\|v\|_{{\mathcal X}_k^T}$ of the stochastic integral $v(t)=\int_0^t S(t-s) \Phi dW(s)$, $k=2,3$.
Let $ u\in {\mathcal X}_k^T$; following the notations in \cite{KPV_JAMS}, we   set
\begin{equation}            \label{normsX2-3}
\| u\|_{{\mathcal X}_2^T} = \max_{j=1, \cdots, 5} \mu_j^T(u) \quad
\mbox{\rm ( resp. } \| u\|_{{\mathcal X}_3^T} = \max_{j=1, \cdots, 7} \nu_j^T(u) \mbox{\rm )},
 \end{equation}
where for some positive number $\rho$, we define
\begin{align*}
\mu_1^T(u)&=\sup_{t\in [0,T]} \| D^{\frac{1}{4}}_x u(t)\|_{L^2_x}, \quad \mu_2^T(u)=\|D_x u\|_{L^{20}_x(L^\frac{5}{2}_t)}, \quad
\mu_3^T(u)=\|D^{\frac{1}{4}}_x u\|_{L^5_x(L^{10}_t)},\\
\mu_4^T(u)&=\| D^{\frac{1}{4}}_x \partial_x u\|_{L^\infty_x(L^2_t)}, \quad \mu_5^T(u)=\|u\|_{L^4_x(L^\infty_t)},\\
\nu_1^T(u)&= \sup_{t\in [0,T]} \|  u(t)\|_{H^{\frac{1}{12}}_x}, \quad \nu_2^T(y)=(1+T)^{-\rho} \|u\|_{L^{\frac{42}{13}}_x(L^{\frac{21}{4}}_t)},
\quad \nu_3^T(u)=\|u\|_{L^{\frac{60}{13}}_x(L^{15}_t)},\\
\nu_4^T(u)&=T^{-\frac{1}{6}} \|u\|_{L^{\frac{10}{3}}_x(L^{\frac{30}{7}}_t)},\quad \nu_5^T(u)=\nu_4^T(D^{\frac{1}{12}}_x u),\\
\nu_6^T(u)&=\|\partial_x u\|_{L^\infty_x(L^2_t)}, \quad \nu_7^T(u)=\nu_6^T(D_x^{\frac{1}{12}} u).
\end{align*}
The following proposition gathers the information from the previous lemmas.
\begin{Prop}    \label{SI_Xk}
For $t\in [0,T]$ let $v(t)=\int_0^t S(t-s)\, \Phi\, dW(s)$.
\begin{enumerate}
  \item[(i)] 
Suppose that $\Phi\in L^{0,1+\epsilon}_2$ for some $\epsilon >0$.  Then
for some positive constant $C$, we have
\begin{equation}    \label{SI_X2}
E\big( \|v\|_{{\mathcal X}_2^T}^2 \big)  \leq C\, (\sqrt{T}+T^2)\,  \|\Phi\|_{L^{0,1+\epsilon}_2}^2.
\end{equation}

  \item[(ii)]
Suppose that $\Phi\in L^{0,\frac{5}{12}}_2$. Then
for some positive constant $C$, we obtain
\begin{equation}    \label{SI_X3}
E\big( \|v\|_{{\mathcal X}_3^T}^2  \big) \leq C \, (T+T^2)\,  \|\Phi\|_{L^{0,\frac{5}{12}}_2}^2 .
\end{equation}
\end{enumerate}
\end{Prop}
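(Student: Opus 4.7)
Since $\|v\|_{\mathcal{X}_k^T}^2$ equals the square of a maximum, it is bounded by $\sum_j \mu_j^T(v)^2$ (resp.\ $\sum_j \nu_j^T(v)^2$), so it suffices to estimate $E(\mu_j^T(v)^2)$ (resp.\ $E(\nu_j^T(v)^2)$) for each $j$. Each of the five (resp.\ seven) components of the norm is matched to exactly one of the preceding lemmas, and whenever the lemma provides an estimate on a moment of order higher than $2$, Jensen's inequality in $\omega$ converts it to a second-moment bound.

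For part (i), the term $\mu_1^T$ is obtained from Lemma \ref{lemunifH} with $\sigma=1/4$ and $q=1$; the term $\mu_2^T=\|D_x v\|_{L^{20}_x(L^{5/2}_t)}$ falls into the regime $p=5/2<q=20$ of Lemma \ref{p<q-Lqx(Lpt)} applied with $\gamma=1$ (so $\tilde\sigma=10/9$), requiring $\Phi\in L_2^{0,1/9}$; the term $\mu_3^T=\|D_x^{1/4}v\|_{L^5_x(L^{10}_t)}$ has $q=5<p=10$ and is covered by Lemma \ref{q<p-Lqx(Lpt)} with $\gamma=1/4$, requiring $\Phi\in L_2^{0,11/20}$; and $\mu_4^T=\|D_x^{1/4}\partial_x v\|_{L^\infty_x(L^2_t)}$ comes from Lemma \ref{inftyx-2t} with the balanced choice $\sigma=9/20$, $\epsilon=1/5$, for which $\sigma-\epsilon=1/4$ and $(1/2-\epsilon/4)\vee\sigma=9/20$, requiring $\Phi\in L_2^{0,9/20}$. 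The dominant term is $\mu_5^T=\|v\|_{L^4_x(L^\infty_t)}$: Lemma \ref{lem-4xinftyt} gives $E(\mu_5^T(v)^4)\le C(T+T^4)\|\Phi\|_{L_2^{0,1+\epsilon}}^4$, from which Jensen yields $E(\mu_5^T(v)^2)\le C(\sqrt{T}+T^2)\|\Phi\|_{L_2^{0,1+\epsilon}}^2$. Since $1+\epsilon$ dominates all five regularity requirements and $\sqrt{T}+T^2$ dominates the $T$-powers appearing (all lie in $[1/2,2]$), summing proves \eqref{SI_X2}.

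For part (ii), $\nu_1^T$ follows from Lemma \ref{lemunifH} with $\sigma=1/12$; the mixed-norm terms $\nu_2^T,\nu_3^T,\nu_4^T,\nu_5^T$ all satisfy $q\le p$ and are controlled by Lemma \ref{q<p-Lqx(Lpt)} with $\gamma\in\{0,0,0,1/12\}$, leading to regularity requirements $4/21,\ 17/60,\ 1/5,\ 17/60$ respectively; $\nu_6^T$ is exactly estimate \eqref{LinftyxDx}, requiring $\Phi\in L_2^{0,2/5}$; and the critical term $\nu_7^T=\|D_x^{1/12}\partial_x v\|_{L^\infty_x(L^2_t)}$ is extracted from Lemma \ref{inftyx-2t} with the choice $\sigma=5/12,\ \epsilon=1/3$, for which $\sigma-\epsilon=1/12$ and $(1/2-\epsilon/4)\vee\sigma=5/12$ exactly. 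The index $5/12$ is the worst among the seven and precisely matches the hypothesis of (ii); all intermediate $T$-powers lie in $[1,2]$ and are absorbed by $T+T^2$, giving \eqref{SI_X3}. The harmless prefactors $(1+T)^{-\rho}$ and $T^{-1/6}$ in $\nu_2^T,\nu_4^T,\nu_5^T$ only improve the estimates for fixed $T$.

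The proof is therefore essentially a matter of bookkeeping; the one delicate point is the simultaneous choice of the parameters $(\sigma,\epsilon)$ in Lemma \ref{inftyx-2t} when estimating $\mu_4^T$ and $\nu_7^T$: the two summands of the maximum $(1/2-\epsilon/4)\vee\sigma$ must be equalized under the constraint $\sigma-\epsilon=\gamma_0$ prescribed by the derivative in the norm. Solving this linear system produces exactly $\sigma=9/20,\ \epsilon=1/5$ for $\mu_4^T$ and $\sigma=5/12,\ \epsilon=1/3$ for $\nu_7^T$, which is what makes the final regularity requirements fit under $1+\epsilon$ and $5/12$ respectively.
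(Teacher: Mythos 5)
Your proposal is correct and follows essentially the same route as the paper: each component of the $\mathcal X_k^T$-norm is matched to the same lemma with the same parameter choices ($\sigma=9/20$, $\epsilon=1/5$ for $\mu_4^T$; $\sigma=5/12$, $\epsilon=1/3$ for $\nu_7^T$; etc.), and higher moments are reduced to second moments by H\"older/Jensen in $\omega$ exactly as in the paper's concluding step.
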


\begin{proof}
(i) Consider  $k=2$  (mKdV).

Lemma \ref{lemunifH} applied  with $q=1$ and $\sigma=\frac{1}{4}$  implies that
$E\big( \big|\mu_1^T(v)\big|^2 \big)\leq C\, T\, \|\Phi\|_{L^{0,\frac{1}{4}}_2}^2$. 

Using Lemma \ref{p<q-Lqx(Lpt)} with
$p=\frac{5}{2}<20=q$ and $\gamma =1$,
we obtain $  E\big( \big|\mu_2^T(v)\big|^{20} \big)\leq C\, T^9\, \|\Phi\|_{L^{0,\frac{1}{9}}_2}^{20}$.

Lemma \ref{q<p-Lqx(Lpt)} applied with $\gamma = \frac{1}{4}$, $2< q=5<p=10$ yields
$ E\big( \big|\mu_3^T(v)\big|^5 \big)\leq C\, T^3 \, \|\Phi\|_{L^{0,\frac{11}{20}}_2}^5$.

Lemma \ref{inftyx-2t}   applied with $\sigma = \frac{9}{20}$ and $\epsilon = \frac{1}{5}$ yields
$ E\big( \big|\mu_4^T(v)\big|^2 \big)\leq C\, T^2  \, \|\Phi\|_{L^{0,\frac{9}{20}}_2}^2 $.   

Finally, Lemma \ref{lem-4xinftyt} implies
$ E\big( \big|\mu_5^T(v)\big|^4 \big)\leq C(T+T^4)  \, \|\Phi\|_{L^{0,1+\epsilon }_2}^2$ for any $\epsilon >0$.

These estimates and H\"older's inequality  conclude the proof of  \eqref{SI_X2}.
\smallskip

(ii) Consider  $k=3$ (gKdV).  

Lemma \ref{lemunifH} applied with $q=1$ and $\sigma=\frac{1}{12}$ implies that
$E\big( \big|\nu_1^T(v)\big|^2 \big)\leq C\, T\, \|\Phi\|_{L^{0,\frac{1}{12}}_2}^2$ .

Apply Lemma \ref{q<p-Lqx(Lpt)} to upper estimate moments of $\nu_k^T(v)$ for $k=2, ... , 5$.
Take  $\gamma=0$, and either $2\leq q=\frac{42}{13}<p=\frac{21}{4}$ for $\nu_2^T(v)$ or
$2\leq q=\frac{60}{13}<p=15$ for $\nu_3^T(v)$. This yields
\begin{align*}
 E\big( \big|\nu_2^T(v)\big|^{\frac{42}{13}} \big)&\leq C\, (1+T)^{-\frac{42 \rho}{13}} T^{\frac{42}{13}\big( \frac{4}{21}+\frac{1}{2}\big)} \,
\|\Phi\|_{L^{0,\frac{4}{21}}_2}^{\frac{42}{13}} \leq   C\,T^{\frac{29}{13}} \,  \|\Phi\|_{L^{0,\frac{4}{21}}_2}^{\frac{42}{13}},   \\
E\big( \big|\nu_3^T(v)\big|^{\frac{60}{13}} \big)&\leq C\, T^{\frac{34}{13}}\,  \|\Phi\|_{L^{0,\frac{17}{60}}_2}^{\frac{60}{13}}.
\end{align*}
Take $2\leq q=\frac{10}{3}<p=\frac{30}{7}$, and either $\gamma=0$ for $\nu_4^T(v)$ or $\gamma = \frac{1}{12}$ for $\nu_5^T(v)$.
This yields
\[
E\big( \big|\nu_4^T(v)\big|^{\frac{10}{3}} \big) \leq
 C\, T^{\frac{41}{18}}\,  \|\Phi\|_{L^{0,\frac{1}{5}}_2}^{\frac{10}{3}},
\quad E\big( \big|\nu_5^T(v)\big|^{\frac{10}{3}} \big)\leq C\, T^{\frac{41}{18}}\,  \|\Phi\|_{L^{0,\frac{17}{60}}_2}^{\frac{10}{3}}  .
\]
Furthermore, the inequality \eqref{LinftyxDx} from Lemma \ref{inftyx-2t} gives exactly
$ E\big( \big|\nu_6^T(v)\big|^2 \big) \leq C\, T^2 \, \|\Phi\|_{L^{0,\frac{2}{5}}_2}^2$.
Finally, the inequality \eqref{Linftyx(L2t)} from  Lemma \ref{inftyx-2t} applied with    $\sigma=\frac{5}{12}$ and $\epsilon = \frac{1}{3}$ yields
$$
E\big( \big|\nu_7^T(v)\big|^2 \big) \leq C\, T^2 \, \|\Phi\|_{L^{0,\frac{5}{12}}_2}^2.
$$
These bounds and H\"older's inequality complete the proof of  \eqref{SI_X3}.
\end{proof}

\section{Local well-posedness} \label{Local}
In this section, we prove the existence of a unique local solution $u \in {\mathcal X}_k^{T(\omega)}$  to \eqref{stoch_gKdV}
for some  {\it random terminal} time $T(\omega)$, which is positive for almost every $\omega$.
\begin{Prop}   \label{local_well_posed}
Let $k=2$,  $u_0\in H^{\frac{1}{4}}_x$ a.s.
 and $\Phi\in L^{0,1+\epsilon}_2$ for some positive $\epsilon$ (resp.  $k=3$,  $u_0\in H^{\frac{1}{12}}_x$ a.s.
 and $\Phi\in L^{0,\frac{5}{12}}_2$). Almost surely there exists a positive random time $T^k(\omega)$, $k=2,3$ such that there exists a unique
 solution to \eqref{stoch_gKdV} in ${\mathcal X}_k^{T_k(\omega)}$.
\end{Prop}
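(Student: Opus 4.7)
The plan is to carry out a pathwise Picard fixed-point argument for the operator
$$\mathcal{T}(u)(t) = S(t) u_0 - \int_0^t S(t-s)\bigl(u(s)^k \partial_x u(s)\bigr)\, ds + v(t),$$
on a ball of $\mathcal{X}_k^{T}$ for sufficiently small random $T = T_k(\omega)$. Note that $v \in \mathcal{X}_k^T$ a.s.\ by Proposition \ref{SI_Xk}, and by hypothesis $u_0 \in H_x^{s_k}$ a.s.\ with $s_2 = 1/4$, $s_3 = 1/12$. Fixing $\omega$ outside a null set, both quantities are finite and we may work deterministically in $\mathcal{X}_k^T$.

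The key deterministic input is the family of linear and multilinear estimates of Kenig--Ponce--Vega \cite{KPV_CPAM} in the spaces $\mathcal{X}_k^T$. These include the Strichartz, maximal function and local smoothing estimates for the Airy group which give
$$\|S(\cdot) u_0\|_{\mathcal{X}_k^T} \le C\, \|u_0\|_{H^{s_k}_x},$$
together with their inhomogeneous versions bounding $\bigl\|\int_0^\cdot S(\cdot-s) F(s)\,ds\bigr\|_{\mathcal{X}_k^T}$ by appropriate dual norms of $F$. Applying these to $F = u^k \partial_x u$ and distributing the factors by H\"older's inequality across the five (resp.\ seven) norms defining $\mathcal{X}_k^T$, one obtains, exactly as in \cite{KPV_CPAM}, a multilinear estimate of the form
$$\Bigl\| \int_0^\cdot S(\cdot-s)\, u^k\, \partial_x u(s)\, ds \Bigr\|_{\mathcal{X}_k^T} \le C\, T^{\theta_k}\, \|u\|_{\mathcal{X}_k^T}^{k+1}$$
for some small exponent $\theta_k > 0$; the powers $(1+T)^{-\rho}$ and $T^{-1/6}$ inserted into the definitions of $\nu_2^T$ and $\nu_4^T$ in \eqref{spacek=3} are tailored precisely so that this gain in $T$ can be extracted when $k=3$.

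Given these two ingredients, for a.e.\ $\omega$ I would set $R(\omega) := 2\bigl(C\|u_0(\omega)\|_{H^{s_k}_x} + \|v(\omega)\|_{\mathcal{X}_k^{T_0}}\bigr)$ for some fixed reference time $T_0$, and then choose $T_k(\omega) \in (0, T_0]$ small enough that $C\, T_k(\omega)^{\theta_k}\, R(\omega)^k \le 1/2$. A short computation using the analogous difference estimate $\|u^k \partial_x u - \tilde u^k \partial_x \tilde u\|_* \le C T^{\theta_k}(\|u\|^k + \|\tilde u\|^k) \|u - \tilde u\|_{\mathcal{X}_k^T}$ then shows that $\mathcal{T}$ sends the closed ball of radius $R(\omega)$ in $\mathcal{X}_k^{T_k(\omega)}$ to itself and is a strict contraction, yielding a unique fixed point $u$. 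Uniqueness in the full space $\mathcal{X}_k^{T_k(\omega)}$ follows from the same difference estimate applied to two potential solutions, shrinking the time horizon further if necessary. To make $T_k$ a genuine stopping time with respect to $(\mathcal{G}_t)$, I would first solve a truncated version of \eqref{mild_gKdV} (replacing the drift by $\chi_R(\|u\|_{\mathcal{X}_k^t})\, u^k \partial_x u$ for a smooth cutoff) globally in time, then define $T_k(\omega)$ as the first exit time from the ball of radius $R$, which is strictly positive a.s.\ since $v$ has continuous trajectories in $\mathcal{X}_k^T$.

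The main obstacle is the multilinear estimate with the positive power $T^{\theta_k}$, especially in the gKdV case $k=3$: it requires distributing five factors of $u$ across the seven different mixed Lebesgue norms appearing in $\mathcal{X}_3^T$, checking scaling and admissibility exactly as in \cite{KPV_CPAM}, and relying on the local smoothing and Kato-type maximal estimates to absorb the derivative in $\partial_x u$ into $\|\partial_x u\|_{L^\infty_x(L^2_t)}$. Everything else---the choice of $R$, the contraction step and the passage to a stopping time---is then essentially routine.
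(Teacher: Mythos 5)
Your proposal follows essentially the same route as the paper: a pathwise contraction argument in a ball of $\mathcal{X}_k^T$, using the Kenig--Ponce--Vega linear estimate $\|S(\cdot)u_0\|_{\mathcal{X}_k^T}\leq c_k\|u_0\|_{H^{\sigma(k)}_x}$ and the multilinear Duhamel estimate with a positive power of $T$ ($T^{1/2}$ for $k=2$, $T^{1/18}(1+T)^{\rho}$ for $k=3$), together with Proposition \ref{SI_Xk} to place $v$ in $\mathcal{X}_k^T$ a.s., and the same choice of radius $R_k(\omega)$ and small random time $T_k(\omega)$. The only difference is your extra truncation step to make $T_k$ a stopping time, which the paper does not need since it only asserts the existence of a positive random time.
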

\begin{proof}
Set
$   \sigma(2)=\frac{1}{4} $ 
and $ \sigma(3)=\frac{1}{12}$.   
  Suppose that a.s. $u_0\in H^{\sigma(k)}_x$ for $k=2,3$.
  Using the inequalities (3.6)-(3.7), (3.9), (3.11) and (3.35)  (resp. (3.6)-(3.7), (3.48), (3.52)-(3.53)) in \cite{KPV_CPAM}, we obtain that
for almost every $\omega$,  $S(t) u_0(\omega) \in {\mathcal X}_k^T$  for $u_0(\omega) \in H^{\sigma(k)}_x$. 
Furthermore, $S(.)\big(u_0(\omega)\big) \in C([0,T];H^{\sigma(k)}_x)$
and
 \[ \|S(.) u_0(\omega)\|_{{\mathcal X}_k^T} \leq c_k \|u_0(\omega)\|_{H^{\sigma(k)}_x}\]
for some constant $c_k$, which does not depend on $T$ or $\omega$ (see \cite{KPV_CPAM} pages 584  and  586).

Proposition \ref{SI_Xk} implies that,
if the operator $\Phi$ is regular enough (that is,  $\Phi \in L^{0,1+\epsilon}_2$ for some positive $\epsilon$ when $k=2$  or $\Phi\in L^{0,\frac{5}{12}}_2$ when $k=3$),
then the random process
$v$,  defined by  $v(t)=\int_0^t S(t-s) \, \Phi \, dW(s)$, belongs a.s. to ${\mathcal X}_k^T$.
Furthermore, the  map $v(.) $ belongs a.s. to $C([0,T];H^{\sigma(k)}_x)$ for any $T>0$. For $k=2,3$ and $R>0$ set
\[ {\mathcal Y}^{R,T}_k:=\big\{ u\in C\big([0,T],H^{\sigma(k)}_x\big) \cap {\mathcal X}_k^T \; : \; \|u\|_{{\mathcal X}_k^T}\leq R\big\}.\]
Let ${\mathcal F}_k$ denote the map defined by
\[ \big( {\mathcal F}_k u\big) (t)=S(t) \, u_0 + v(t) -  \int_0^t S(t-s) \big( u^k \partial_x u\big)(s) ds.\]
Let $k=2$; using inequalities proved in \cite{KPV_CPAM} page 584-585,
 we deduce that for $u_0\in H^{\frac{1}{4}}_x$ a.s. and $\Phi\in L^{0,1+\epsilon}_2$ for some positive $\epsilon$ given $u, u_1, u_2 \in {\mathcal X}_2^T$, we have
\begin{align*}
\|{\mathcal F}_2 u\|_{{\mathcal X}_2^T} &\leq
c_2 \|D^{\frac{1}{4}}_x u_0 \|_{L^2_x} + \| v\|_{{\mathcal X}_2^T} +\tilde{C}_2\, T^{\frac{1}{2}}
\, \|u\|_{{\mathcal X}_2^T}^3,\\
\|{\mathcal F}_2 u_1 - {\mathcal F}_2 u_2\|_{{\mathcal X}_2^T} &\leq \bar{C}_2\, T^{\frac{1}{2}} \big( \|u_1\|_{{\mathcal X}_2^T}^2
+ \|u_2\|_{{\mathcal X}_2^T}^2 \big) \, \|u_1-u_2\|_{{\mathcal X}_2^T}.
\end{align*}
For almost every $\omega$ choose
\begin{equation} \label{defR2}
 R_2(\omega) = 2\Big( c_2 \|u_0(\omega)\|_{H^{\frac{1}{4}}_x} +
\|v(\omega)\|_{{\mathcal X}_2^T}\Big),
\end{equation}
 and let
$T_2(\omega)>0$ satisfy
\begin{equation} \label{defT2}
 2\,  \tilde{C}_2\,  T_2(\omega)^{\frac{1}{2}}\, R_2(\omega)^2 \leq 1
\quad \mbox{\rm and}\quad    4\, \bar{C}_2 T_2(\omega)^{\frac{1}{2}}\, R_2(\omega)^2 \leq 1.
\end{equation}

In a similar way, when $k=3$, the
inequalities proved in \cite{KPV_CPAM} page 590
 imply that for $u_0\in H^{\frac{1}{12}}_x$ a.s. and $\Phi\in L^{0,\frac{5}{12}}_2$, given $u, u_1, u_2
\in {\mathcal X}_3^T$, we have for some  $\rho>0$
\begin{align*}
\|{\mathcal F}_3 u\|_{{\mathcal X}_3^T} &\leq c_3 \|D^{\frac{1}{12}}_x u_0 \|_{L^2_x} + \| v\|_{{\mathcal X}_3^T} +\tilde{C}_3\, T^{\frac{1}{18}}\,
(1+T)^\rho\,   \|u\|_{{\mathcal X}_3^T}^4,\\
\|{\mathcal F}_3 u_1 - {\mathcal F}_3 u_2\|_{{\mathcal X}_3^T} &\leq \bar{C}_3\, T^{\frac{1}{18}}\, (1+T)^\rho\,  \big[ \|u_1\|_{{\mathcal X}_3^T}^3
+ \|u_2\|_{{\mathcal X}_3^T}^3 \big] \, \|u_1-u_2\|_{{\mathcal X}_3^T}.
\end{align*}
For almost every $\omega$ choose
\begin{equation} \label{defR3}
R_3(\omega) = 2\Big( c_3 \|u_0(\omega)\|_{H^{\frac{1}{12}}_x} + \|v(\omega)\|_{{\mathcal X}_3^T}\Big),
\end{equation}
 and let
$T_3(\omega)>0$ be such that
\begin{equation}  \label{defT3}
 2\,  \tilde{C}_3\,  T_3(\omega)^{\frac{1}{18}}\, \big(1+T_3(\omega)\big)^\rho \,  R_3(\omega)^3 \leq 1
\quad \mbox{\rm and}\quad    4\, \bar{C}_3 \, T_3(\omega)^{\frac{1}{18}}\, \big(1+T_3(\omega)\big)^\rho \, R_3(\omega)^2 \leq 1.
\end{equation}
These choices imply that for $k=2,3$, ${\mathcal F}_k$ maps ${\mathcal Y}_k^{R_k(\omega), T_k(\omega)}$ into itself.
Furthermore,  since
$\| {\mathcal F}_k u_1 - {\mathcal F}_k u_2\|_{{\mathcal X}_k^T}\leq \frac{1}{2} \|u_1-u_2\|_{{\mathcal X}_k^T}$ for
$u_1, u_2\in {\mathcal Y}_k^{R_k(\omega), T_k(\omega)}$, the map ${\mathcal F}_k$  is a strict contraction on that set.
 Hence, ${\mathcal F}_k$ has a unique fixed point
in ${\mathcal Y}_k^{R_k(\omega), T_k(\omega)}$, which is the unique solution to \eqref{stoch_gKdV} in ${\mathcal X}_k^{T_k(\omega)}$, $k=2,3$, thus, concluding the proof.
 \end{proof}

 \section{Global well-posedness}\label{Global}
 We now prove global existence when the initial condition $u_0$ is in $H^1_x$ a.s.
 The argument relies on a regularization of $u_0$ and $\Phi$ and on the following conservation laws.
 When $k=2,3$ and $z_k$ is the (deterministic) solution to the gKdV equation
 \[ \partial_t z_k(t) + \big( \partial^3_x z_k(t) + z_k(t)^k \partial_x z_k(t) \big)  =0, \quad z_k(0)=z_0\in H^1_x,\]
 then the following quantities are time-invariant 
 \begin{align}
\mbox{\rm the  mass:}\quad & \|z_k(t)\|_{L^2_x}^2,     \label{mass}\\
\mbox{\rm the  Hamiltonian:} \quad& {\mathcal H}_k(z_k(t))= \frac{1}{2}  \int_{\RR} |D_x z_k(t)|^2 dx
  - \frac{1}{(k+1)(k+2)} \int_{\RR} z_k(t) ^{k+2} dx.    \label{Hamiltonian}
 \end{align}

We now prove Theorem \ref{global}.

 \begin{proof} We  suppose that $u_0\in  L^2_\omega(H^1_x) \cap    L^{2q}_\omega(L^2_x)$
 for some $q\in [2,\infty) $ to be chosen later.

   The proof is based on   approximations   of $\Phi$ and $u_0$ and contains several steps.
   {Indeed, we want to obtain moments of the $H^1_x$-norm of $u_n$ uniformly in $t$. The mild formulation does not allow us to use
   martingale estimates for the stochastic integral appearing when the It\^o formula is applied to the mass and to the Hamiltonian. 
   Thus,  we have to use a sequence of strong solutions $\{u_n\}$ of \eqref{stoch_gKdV}, where $\Phi_n$ is  a ``smoother" Hilbert-Schmidt operator and
 $  u_{0,n}$ is a ``smoother" initial condition.    }
 Let $\Phi_n\in L^{0,4}_2$ and   $u_{0,n}\in H^3_x$ be such that
 \begin{align}
 &\Phi_n \to \Phi \quad \mbox{\rm in } L^{0,1+\epsilon}_2, \; \epsilon >0 \; (\mbox{\rm resp. in } L^{0,1}_2)\;  \mbox{\rm for }
  k=2\;  (\mbox{\rm resp. } k=3),       \label{conv_Phi}\\
& u_{0,n}\to u_0 \quad \mbox{\rm in } L^2_\omega(H^1_x) \cap    L^{2q}_\omega(L^2_x) \quad \mbox{\rm and in } H^1_x\;
 \mbox{\rm a.s.}  \label{conv_initial}
 \end{align}
 \smallskip

{\bf Step 1.}~  Proposition \ref{SI_Xk} proves that the sequence  $v_n(t):=\int_0^t S(t-s) \Phi_n dW(s)$
 converges to the stochastic integral $v$ in $L^2_\omega({\mathcal X}_k^T)$. Hence, there exists a subsequence, still denoted $\{v_n\}$,
 which converges to $v$ a.s. Furthermore, for any integer $n$ and $k=2,3$, there exists a unique solution $u_n$ to
 \[ \partial_t u_n(t) + \big( \partial^3_x u_n(t) + u_n(t)^k \partial_x u_n(t) \big) dt =0, \quad u_n(0)=u_{0,n}, \]
and $u_n$ belongs a.s. to $L^\infty_t(H^3_x)$. Indeed, following the argument in \cite{deB_Deb_sKdV}, Lemma 3.2, if we set
$v_n(t )= \int_0^t S(t-s) \Phi_n dW(s)$ and let $z_n=u_n-v_n$,
then $z_n$ has to solve a.s. the deterministic equation
\[ \partial_t z_n(t) + \big[ \partial^3_x z_n(t) + \big(z_n(t)+v_n(t)\big)^k \partial_x \big( z_n(t)+v_n(t)\big) \big] dt =0, \quad z_n(0)=u_{0,n}. \]
To ease notations we do not specify the value of $k=2,3$ when dealing with the solution $u_n$.
 Standard arguments such as the parabolic regularization described in \cite{Temam}
 yield  that the above equation has  a unique local solution. Finally, an argument similar to that in \cite{Gardner} proves that the invariant quantities in \eqref{mass}
 and \eqref{Hamiltonian}
 allow us to extend this solution to any time interval $[0,T]$.  
 Note that $u_n\in L^\infty_t(H^3_x)\cap {\mathcal X}_k^T$ a.s.
 \smallskip

 {\bf Step 2.} ~
 We next prove that the sequence $(u_n)$ is bounded in $L^{2q}_\omega(L^\infty_t(L^2_x))$. The proof is based on It\^o's formula for
 the mass and conservation of the mass ($L^2_x$-norm) of the solutions to the deterministic gKdV equation.

Using the   conservation of mass  for the solutions   to the deterministic gKdV equation, we get
$$
\int_0^t \big( u_n(s), \partial_x^3 u_n(s)+u_n(s)^k \partial_x u_n(s)\big) ds =0.
$$ 
Note that this requires  $u_n(s)\in H^3_x$ a.s., which holds by Step 1, and
$u_n(s)\in L^{2(k+1)}_x$ a.s., which is true, since $H^1_x\subset L^{2(k+1)}_x$.
 It\^o's formula applied to $\|u_n(t)\|_{L^2_x}^2$  and the identity $\sum_{j\geq 0} \|\Phi_n e_j\|^2_{L^2_x} = \|\Phi\|^2_{L_2^{0,0}}$ yield
 \[ 
 \|u_n(t)\|_{L^2_x}^2= \|u_{0,n}\|_{L^2_x}^2 + 2\int_0^t \big( u_n(s),  \Phi_n dW(s)\big) + t  \|\Phi_n\|_{L^{0,0}_2}^2 .
 \] 
 {Using once more It\^o's formula with the map $y\mapsto y^q$, $q\in [2,\infty)$, and the process $\|u_n(t)\|_{L^2_x}^{2}$,  we obtain }
 \begin{equation}   \label{Ito_mass}
 \|u_n(t)\|_{L^2_x}^{2q}= \|u_{0,n}\|_{L^2_x}^{2q} + 2q\int_0^t \|u_n(t)\|_{L^2_x}^{2(q-1)} \big( u_n(s),  \Phi_n dW(s)\big)
 + R(t) ,
 \end{equation}
 where
 \begin{align*}
  R(t)=&   q \int_0^t \|u_n(s)\|_{L^2_x}^{2(q-1)} \|\Phi_n\|_{L_2^{0,0}}^2 ds
 + 2q(q-1) \int_0^t \|u_n(s)\|_{L^2_x}^{2(q-2)} \sum_{j\in \N} \big( u_n(s), \Phi_n e_j\big)^2 ds  .
 \end{align*}
  The Cauchy-Schwarz inequality applied to the last term gives
 \begin{equation}    \label{Rt}
 |R(t)| \leq  \|\Phi_n\|_{L^{0,0}_2}^2 \int_0^t (2q^2 -q) \|u_n(s)\|_{L^2_x}^{2(q-1)} ds
 \leq  \frac{1}{4} \sup_{s\in [0,T]}  \|u_n(s)\|_{L^2_x}^{2q} + C(T) \|\Phi_n\|_{L^{0,0}_2}^{2q},
 \end{equation}
 for some  $C(T)>0$ which is an increasing function of $T$, where the last inequality is obtained using
 Young's inequality  with the conjugate exponents $q$ and $\frac{q}{q-1}$.
 Furthermore, the Davies inequality for stochastic integrals, the Cauchy-Schwarz
 and then the Young inequality applied with the conjugate  exponents  $ 2q$ and $\frac{2q}{2q-1} $ imply
 \begin{align}    \label{Davies_mass}
 E\Big( &\sup_{t\in [0,T]} \int_0^t \! \!\|u_n(s)\|_{L^2_x}^{2(q-1)}  \big( u_n(s) , \Phi_n dW(s)\Big) \leq
  3 E \Big(\Big\{ \!\int_0^T\! \!\|u_n(s)\|_{L^2_x}^{4(q-1)} \!\sum_{j\geq 0} \big( u_n(s)  ,  \Phi_n e_j\big)^2 ds \! \Big\}^{\frac{1}{2}}\Big)
  \nonumber \\
 &\leq 3 E\Big(  \Big\{ \int_0^T \|u_n(s)\|_{L^2_x}^{4q-2}
 \, \|\Phi_n\|_{L^{0,0}_2}^2 ds\Big\}^{\frac{1}{2}} \Big)
 \; \leq 3 E\Big(   \sup_{s\in [0,T]}  \|u_n(s)\|_{L^2_x}^{2q-1}
 \, \sqrt{T}\,  \|\Phi_n\|_{L^{0,0}_2}  \Big)
 \nonumber  \\
 &\leq \frac{1}{4}  E\Big( \sup_{s\in [0,T]} \|u_n(s)\|_{L^2_x}^{2q}\Big)
 + C(T) \, \|\Phi_n\|_{L^{0,0}_2}^{2q},
 \end{align}
 for some $C(T)>0$, which is an increasing function of $T$.
 The inequalities \eqref{Ito_mass}-\eqref{Davies_mass} yield  the existence of a constant $C(T)>0$ such that
 \begin{equation}    \label{moments_L2norm}
  E\Big(  \sup_{s\in [0,T]} \|u_n(s)\|_{L^2_x}^{2q}\Big) \leq 2 E \big(  \|u_{0,n}\|_{L^2_x}^{2q} \big) + C(T) \|\Phi_n\|_{L^{0,0}_2}^{2q}.
 \end{equation}

{\bf Step 3.} ~ 
We now prove that $(u_n)$ is bounded in $L^2_\omega(L^\infty_t(H^1_x))$.

To upper estimate the $H^1_x$ norm of $u_n$, we use the Hamiltonian   ${\mathcal H}_k$ defined in \eqref{Hamiltonian}.
The time  invariance  of the Hamiltonian,  aka conservation of energy,  for the solution to the
deterministic gKdV equation yields
\[
\int_0^t {\mathcal H}_k'(u_n(s)) \big[ \partial_x^3 u_n(s) + u_n(s)^k \partial_x u_n(s) \big] ds =0,
\]
{ where
for $\varphi, \psi \in H^3_x$, we have
\[ {\mathcal H}_k'(\varphi)(\psi) = \int_{\RR} D_x \varphi D_x\psi dx - \frac{1}{k+1} \int_{\RR} \varphi^{k+1} \psi dx
= - \int_{\RR} \big[ D^2_x \varphi  + \frac{1}{k+1}  \varphi^{k+1} \big] \psi dx.\]
Note that this integral makes sense for $u_n(s)$.  Indeed, the Gagliardo-Nirenberg inequality implies
$H^1_x\subset L^q_x$ for any $q\in [2,\infty)$ and,  since $u_n\in H^3_x$ a.s., we have  $u_n(s)\in  L^p_x$  for any $p\in [2,\infty)$.
Hence,   $u_n(s)^{k+1}  \in L^2_x$ a.s. }

Integration by parts implies that for $\varphi\in H^3_x$, the bilinear form ${\mathcal H}_k''(\varphi)$ can be written as 
\begin{equation} \label{H"}
 {\mathcal H}_k''(\varphi)(v_1,v_2) = \big( \partial_x v_1\, , \, \partial_x v_2\big) -\int_{\RR} \varphi^k \,  v_1\, v_2 dx, \quad v_1, v_2\in H^3_x.
 \end{equation}
 Since $\Phi_n\in L^{0,4}_2$, the vectors $\Phi_n e_j\in H^3_x$.
Thus, the It\^o formula applied to  ${\mathcal H}_k(u_n)$ yields 
\begin{align}     \label{Ito_Hamilton}
{\mathcal H}_k(u_n(t)) =& {\mathcal H}_k(u_0) -
\int_0^t \Big[ \big( \partial_x^2 u_n(s) ,\Phi_n dW(s)\big) + \frac{1}{k+1} \big( u_n(s)^{k+1},\Phi_n dW(s)
\big) \Big]     \nonumber \\
&  + \frac{1}{2} \int_0^t 
\sum_{j\geq 0} {\mathcal H}_k''(u_n(s) )(\Phi_n e_j , \Phi_n e_j) ds.
\end{align}
Using the explicit form of \eqref{H"}, we obtain
 \begin{align*}
 \sum_{j\geq 0} {\mathcal H}_k''(u_n(s) )(\Phi_n e_j , \Phi_n e_j) =
  &\sum_{j\in \N} \int_{\RR} \Big[  | \partial_x (\Phi_n e_j)|^2   -  | u_n(s)|^k \big( \Phi_n e_j\big)^2 \Big] dx \\
   \leq &  \| \Phi_n\|_{L^{0,1}_2}^2 +  \sum_{j\in \N} \int_{\RR}  \| \Phi_n e_j\|_{L_x^\infty}^2
  |u_n(s)|^k    dx  \\
 \leq & \| \Phi_n\|_{L^{0,1}_2}^2 + C  \;  \|\Phi_n\|^2_{L^{0,1}_2}\; \|u_n(s)\|_{L^{k}_x}^k,
 \end{align*}
 where we used the Sobolev embedding $H^1_x\subset L^\infty_x$ to obtain the last upper estimate.

For $k=2$ the last expression simplifies to
\begin{equation}  \label{trace_k=2}
\sum_{j\geq 0} {\mathcal H}_2''(u_n(s) )(\Phi_n e_j , \Phi_n e_j)  \leq \| \Phi_n\|_{L^{0,1}_2}^2 + C  \;  \|\Phi_n\|^2_{L^{0,1}_2}\; \|u_n(s)\|_{L^{2}_x}^2
\end{equation}
for some constant $C >0$.

For $k=3$, the Gagliardo-Niremberg inequality implies $\| u_n\|_{L^3_x} \leq C \|u_n\|_{H_x^1}^\alpha \, \|u_n\|_{L^2_x}^{1-\alpha}$ for
$\alpha = \frac{1}{2} - \frac{1}{3} = \frac{1}{6}$.
Therefore, using Young's inequality with the conjugate exponents $4$ and $4/3$, we get
  \begin{equation}         \label{trace_k=3}
 \sum_{j\geq 0} {\mathcal H}_3''(u_n(s) )(\Phi_n e_j , \Phi_n e_j)  \leq
   \epsilon \|u_n(s)\|_{H^1_x}^2 + C(\epsilon) \,  \| \Phi_n\|_{L^{0,1}_2}^{\frac{8}{3}} \, \|u_n(s)\|_{L^2_x}^{\frac{10}{3}}\,   + \| \Phi_n\|_{L^{0,1}_2}^2,
  \end{equation}
 for any small  constant $\epsilon>0$ to be chosen later, and some positive constant $C(\epsilon)$.

 As in \eqref{Davies_mass}, using once more the Davies inequality for the stochastic integral,
  integration by parts and the Cauchy-Schwarz inequality, we obtain
 \begin{align*}
 E\Big( \sup_{t\in [0,T]} & \int_0^t -  \Big( \partial_x^2 u_n(s) + \frac{1}{k+1}  u_n(s)^{k+1},\Phi_n dW(s)
\Big) \Big) \\
&\leq 3 E\Big( \Big\{ \int_0^T \sum_{j\geq 0}  \Big( \partial_x^2 u_n(s) + \frac{1}{k+1} u_n(s)^{k+1}\, , \, \Phi_n e_j \Big)^2 ds \Big\}^{\frac{1}{2}} \Big)\\
&\leq 3 \sqrt{2} E\Big( \Big\{ \int_0^T \Big[ \sum_{j\geq 0} \big( \partial_x u_n(s)\, , \, \partial_x \Phi_n e_j\big)^2 +
\sum_{j\geq 0} \Big( \frac{1}{k+1}  u_n(s)^{k+1},\Phi_n e_j\Big)^2 \Big] ds \Big\}^{\frac{1}{2}} \Big) \\
& \leq C \sqrt{T} \|\Phi_n\|_{L^{0,1}_2} \Big[ E\Big( \sup_{s\in [0,T]}  \|u_n(s)\|_{H^1_x} \Big) + E\Big( \sup_{x\in [0,T]} \|u_n(s)\|_{L^{k+1}_x}^{k+1} \Big)
\Big],
 \end{align*}
 where the last inequality follows from the Sobolev embedding $H_x^1\subset L^\infty_x$.
 
 The Gagliardo-Nirenberg inequality implies $\|u_n\|_{L^{k+1}_x} \leq \| u_n\|_{H^1_x}^\beta \, \|u_n\|_{L^2_x}^{1-\beta}$,
 where $\beta = \frac{1}{2}-\frac{1}{k+1}= \frac{k-1}{2(k+1)}$.
Using H\"older's and  Young's inequalities with the conjugate exponents $\frac{4}{k-1}$ and $\frac{4}{5-k}$,  we obtain
 \begin{align}   \label{Davies_Hamilton}
 E\Big(& \sup_{t\in [0,T]}    \int_0^t - \Big[ \big( \partial_x^2 u_n,\Phi_n dW(s)\big) + \frac{1}{k+1} \big( u_n(s)^{k+1},\Phi_n dW(s)
\big) \Big] \Big)    \nonumber \\
\leq & \frac{\epsilon}{2} E\Big( \sup_{s\in [0,T]} \|u_n(s)\|_{H^1_x}^2 \Big) + C(\epsilon)\, T\,  \|\Phi_n\|_{L^{0,1}_2}^2   \nonumber \\
&
+ C \sqrt{T} \|\Phi_n \|_{L^{0,1}_2} E\Big( \sup_{s\in [0,T] } \|u_n(s)\|_{H^1_x}^{\frac{k-1}{2}}
\; \sup_{s\in [0,T]} \|u_n(s)\|_{L^2_x}^{\frac{k+3}{2}} \Big)  \nonumber \\
\leq & \epsilon E\Big( \sup_{s\in [0,T]} \|u_n(s)\|_{H^1_x}^2 \Big) + C(\epsilon)\, T\,  \|\Phi_n\|_{L^{0,1}_2}^2
+  C(\epsilon, T) \, \|\Phi_n\|_{L^{0,1}_2}^{\frac{4}{5-k}}\,   E\Big(\sup_{s\in [0,T]} \|u_n(s)\|_{L^2_x}^{\frac{2(k+3)}{5-k}} \Big) \,
\end{align}
for some number $C(\epsilon, T)>0$, which is again an increasing function of $T$ for fixed $\epsilon>0$.
 Note that for $k=2$, $\frac{k+3}{5-k}=\frac{5}{3} <2 $, and for $k=3$ we have $\frac{k+3}{5-k}=3$.

 Collecting the information from the estimates
  \eqref{Ito_Hamilton}-\eqref{Davies_Hamilton} and choosing $\epsilon=\frac{1}{16}$,
    we obtain for $q(2)=2$ (resp. $q(3)=3$) the existence of a positive constant $C(T)$ such that
  \begin{align*}
  E\Big( \sup_{t\in [0,T]}  {\mathcal H}_k(u_n(t))\Big) \leq & E\big( {\mathcal H}_k(u_{0,n}) \big)
  + \frac{1}{8} E\Big( \sup_{t\in [0,T]} \|u_n(s)\|_{H^1_x}^2\Big)+ C\, T\, \|\Phi_n\|_{L^{0,1}_2}^2  \\
  &  +C(T) \big( 1+ \|\Phi_n\|_{L^{0,1}_2}^{\frac{8}{3}}\big) \Big[ 1+ E\Big(\sup_{s\in [0,T]} \|u_n(s)\|_{L^2_x}^{2 q(k)} \Big)\Big] .
  \end{align*}
  Finally, the Gagliardo-Nirenberg inequality implies that $\|\varphi\|_{L^{k+2}_x} \leq C \| \varphi \|_{H^1_x}^\gamma \, \|\varphi\ |_{L^2_x}^{1-\gamma}$
  for $\gamma = \frac{1}{2}-\frac{1}{k+2} = \frac{k}{2(k+2)}$. Thus, using  Young's inequality with the conjugate exponents
  $\frac{4}{k}$ and $\frac{4}{4-k}$, we deduce
  \[ \frac{1}{4} \|u_n(s)\|_{H^1_x}^2 - C \|u_n(s)\|_{L^2_x}^{\frac{2(k+4)}{4-k}} \leq {\mathcal H}_k(u_n(s))
  \leq \frac{3}{4} \|u_n(s)\|_{H^1_x}^2 +C \|u_n(s)\|_{L^2_x}^{\frac{2(k+4)}{4-k}}
  \]
  for some constant $C>0$.
Let $\tilde{q}(k)=\frac{(k+4)}{4-k}$; then $\tilde{q}(2)=3>q(2)$, $\tilde{q}(3)=7>q(3)$.  For
$u_0\in L^{2\tilde{q}(k)}_\omega(L^2_x)$ we have for some positive constant $C(T)$
 \begin{align*}
 \frac{1}{4}  E\Big(&  \sup_{t\in [0,T]} \|u_n(s)\|_{H^1_x}^2\Big) \leq  \frac{1}{8} E\Big( \sup_{t\in [0,T]} \|u_n(s)\|_{H^1_x}^2\Big)
 + \frac{3}{4} E\big( \|u_{0,n}\|_{H^1_x}^2\big)  +
 C E\big(\|u_{0,n}\|_{L^2_x}^{2\tilde{q}(k)} \big)  \\
& \quad 
+ C(T)  \big( 1+\|\Phi_n\|_{L^{0,1}_2}^{\frac{8}{3}} \big)
 \Big[ 1+ E\Big(\sup_{s\in [0,T]} \|u_n(s)\|_{L^2_x}^{2 q(k)} \Big) \Big]   + C E\Big(\sup_{s\in [0,T]} \|u_n(s)\|_{L^2_x}^{2 \tilde{q}(k)} \Big).
 \end{align*}
 Furthermore,   if  $u_0\in L^{2\tilde{q}(k)}_\omega(L^2_x)$, choosing  the exponent
 $q=\tilde{q}(k)\geq 2 $ used for the approximation $u_{0,n}$ of $u_0$, we deduce from \eqref{moments_L2norm}  that
 $\|u_n\|_{L^2_\omega(L^\infty_t(H^1_x))}$ is bounded in terms of
  $\| \Phi_n\|_{L^{0,1}_2}$
and $\|u_{0,n}\|_{L^{2\tilde{q}(k)}_\omega(L^2_x)}$.  Since these norms are bounded by a constant independent of $n$,
by virtue of the convergence we have required in Step 1, we can now deduce that the sequence  $\{u_n\}$
 is bounded in $L^2_\omega(L^\infty_t(H^1_x))$.
 \smallskip

 {\bf Step 4.}~ The bound of $\{u_n\}$ proved in Step 3 implies the existence of a random variable
 $\tilde{u}\in L^2_\omega(L^\infty_t(H^1_x))$ and of a subsequence (still denoted $\{u_n\}$) such that
 \[ u_n \rightharpoonup  \tilde{u} \quad \mbox{\rm in } L^2_\omega(L^\infty_t(H^1_x)) \; \mbox{\rm weak star}.
 \]
  Technically speaking, $\tilde{u}\in L_{\omega,w^*}(L^\infty_t(H^1_x))$,  since we have used the weak star limit.
 Nevertheless,  $\tilde{u}\in L^\infty_t(H^1_x)$ a.s.
 Recall  $R_2(\omega)$ and $R_3(\omega)$ from \eqref{defR2} and \eqref{defR3}, respectively.
Let $\tilde{R}_k(\omega)$ be defined by
 \[
  \tilde{R}_k(\omega):=2\big[ c_k (\|u_0(\omega)\|_{H^1_x} + \| \tilde{u}(\omega)\|_{L^\infty_t(H^1_x)}\big) + \| v(\omega)\|_{{\mathcal X}_k^T} \big]
  \geq R_k(\omega), \quad k=2,3,
  \]
  where $v(t)=\int_0^t S(t-s) \Phi dW(s)$.
  Next recall $T_2(\omega)$ and $T_3(\omega)$ from \eqref{defT2} and \eqref{defT3}, respectively.
  Choose $\tilde{T}_k(\omega)>0$, $k=2,3$,  such that
  inequalities similar to  \eqref{defT2} and \eqref{defT3} are satisfied with $\tilde{T}_k(\omega)$ and $\tilde{R}_k(\omega)$ instead of $T_k(\omega)$ and $R_k(\omega)$, respectively.
  Note that $\tilde{T}_k(\omega) \in (0, T_k(\omega)]$.
 Let ${\mathcal F}_{n,k}$, $k=2,3$, $n\in \N$ be defined on ${\mathcal X}_k^{\tilde{T}_k(\omega)}$ by
  \[ ({\mathcal F}_{k,n} z) (t) = S(t) u_{0,n} + v_n (t)+ \int_0^t S(t-s) z(s)^k \partial_x z(s) ds,\]
  where $v_n(t)=\int_0^t S(t-s) \Phi_n  dW(s)$.

  {
  From Step 1 we know that  a.s. $u_n(\omega)\in {\mathcal X}_k^{\tilde{T}_k(\omega)}$,  and that  a.s.
  $u_n(\omega)$ is the unique fixed point of the map ${\mathcal F}_{k,n}$ on the ball of radius $\tilde{R}_k(\omega)$
  of ${\mathcal X}_k^{\tilde{T}_k(\omega)}$. Indeed, on that
  ball ${\mathcal F}_{k,n}$ is  a contraction,
  since by construction we know that $\| {\mathcal F}_{k,n} z_1- {\mathcal F}_{k,n} z_2\|_{{\mathcal X}_k^{\tilde{T}_k(\omega)}}
  \leq \frac{1}{2} \|z_1-z_2\|_{{\mathcal X}_k^{\tilde{T}_k(\omega)}}$.
  The convergences from \eqref{conv_Phi} and
  \eqref{conv_initial} prove that
 $\|S(t) u_0 - S(t) u_{0,n}\|_{{\mathcal X}_k^T} $ and $\|v - v_n\|_{{\mathcal X}_k^T} $ converge to 0 as $n\to \infty$  for every $T>0$.
    Furthermore, we have
  \[ \| {\mathcal F}_{k,n} u_n - {\mathcal F}_{k} u\|_{{\mathcal X}_k^{\tilde{T}_k(\omega)}} \leq \| u_{0,n}-u_0\|_{{\mathcal X}_k^{\tilde{T}_k(\omega)}}
  + \| v_n-v \|_{{\mathcal X}_k^{\tilde{T}_k(\omega)}} + \frac{1}{2}  \|  u_n -  u\|_{{\mathcal X}_k^{\tilde{T}_k(\omega)}}.
  \]
 Hence, $u_n$ converges to $u$ a.s. in ${\mathcal X}_k^{\tilde{T}_k(\omega)}$, where $u$ is the unique fixed
  point of ${\mathcal F}_k$ on the ball of radius $\tilde{R}_k(\omega)$ of ${\mathcal X}_k^{\tilde{T}_k(\omega)}$.

   This implies that $u(\omega)=\tilde{u}(\omega)$ a.s. on the time interval $[0,\tilde{T}_k(\omega)]$. Since $\tilde{u}\in L^\infty_t(H^1_x)$ a.s.,
   given $\alpha \in (0,1)$, we may choose  $\tau_k(\omega) \in [\alpha \tilde{T}_k(\omega), \tilde{T}_k(\omega)]$ such  that
  $\| u(\tau_k(\omega))\|_{H^1_x} \leq \| \tilde{u}\|_{L^\infty_t(H^1_x)}$.
  Replacing the initial condition $u_0$ by
  $u(\tau_k(\omega))$, this enables us to define a solution on the time interval
  $\big[\tau_k(\omega), \big(\tau_k(\omega) + \tilde{T}_k(\omega)\big)\wedge T\big]$.
  Thus, we can  inductively define  a solution on any fixed time interval $[0,T]$ a.s.
   Indeed,  $\tilde{T}_k(\omega)>0$ a.s. and at each step we increase the
  length of the time interval by at least $\alpha \tilde{T}_k(\omega)$.

 Finally, as in \cite{KPV_CPAM} we obtain that $S(t) u_0$ is a.s. continuous from $[0,T]$ to $H^1_x$. The stochastic integral
 $v(t)=S(t) \int_0^t S(-s) \Phi dW(s)$ also belongs to $C([0,T], H^1_x)$ a.s.  Hence, as in the deterministic framework of \cite{KPV_CPAM},
 we deduce that $u\in C([0,T],H^1_x)$ a.s. This concludes the proof.  }
 \end{proof}

 \noindent {\bf Acknowledgments:} This work started when both authors participated in the semester program ``New Challenges in PDE :
 Deterministic dynamics and randomness in high and infinite dimensional systems"  at MSRI in Fall 2015.
 They would like to thank MSRI for the financial support and the excellent working conditions.
 The project continued when the second author participated in the special trimester ``Nonlinear wave equations" at the IHES in Summer 2016 and that
 excellent working environment gave an additional boost to this collaboration, for which both authors are very thankful. S.R. was partially supported by the NSF CAREER grant \# 1151618.

\bibliographystyle{amsplain}

\begin{thebibliography}{99}

\bibitem{adams}  Adams,  R. A. and  J. J. F. Fournier, 
{\em Sobolev spaces}, 
Pure and Applied Mathematics Series, 2nd edition, Academic Press,(2003)

\bibitem{Bou} J. Bourgain, 
Periodic nonlinear Schr\"odinger equation and invariant measures, 
{\em Comm. Math. Phys.} 166 (1994), no. 1, 1–26

\bibitem{CKSTT} Colliander, J., Keel, M., Staffilani, G., Takaoka, H. and Tao, T.,
Sharp global well-posedness for KdV and modified KdV on R and T.
{\em J. Amer. Math. Soc.} 16 (2003), no. 3, 705–749.

\bibitem{deB_Deb_sKdV} de Bouard, A. and Debussche, A., 
On the Stochastic Korteweg-de Vries Equation, 
{\em J. Func. Anal.}, {\bf 154}, 215-251 (1998).

\bibitem{deB_Deb_homogeneous} de Bouard, A. and Debussche, A., The Korteweg-de Vries equation with multiplicative homogeneous noise,
{\em  Stochastic Differential Equations : Theory and Applications, P.H. Baxendale and S.V. Lototsky Ed.},
Interdisciplinary Math. Sciences,  vol. 2, World Scientific, 2007.





\bibitem{deB_Deb_Tsu} de Bouard, A., Debussche, A. and Tsutsumi, Y.,  
White noise driven Korteweg-de Vries Equations, {\em  J. Func. Anal.},
{\bf 169}, 532-558 (1999).

\bibitem{Gardner} Gardner, C.S., 
Korteweg-de Vries equation and generalizations IV: The Korteweg-de Vries equation as a Hamiltonian
 system, {\em J. Math. Phys.}, {\bf 12}, 1548-1551 (1971).

\bibitem{Kato_1975} Kato, T., 
Quasilinear equations of evolution with applications to partial differential equations,
 Lecture Notes in Math. 448, 27-50, Springer Verlag, Berlin (1975).

\bibitem{Kato_1983} Kato, T., On the Cauchy problem for the (generalized) Korteweg-de Vries equation,
 Studies in applied mathematics,   {\em Adv. Math. Suppl. Stud.}, 8, 93-128, Academic Press, New York, 1983.

\bibitem{KPV_JAMS} Kenig, C.E., Ponce, G. and Vega, L.,  Well-posedness of the initial value problem for the
Korteweg-de Vries equation  {\em  J. Amer. Math. Soc.}, {\bf 4},  323-347 (1991).

\bibitem{KPV_CPAM} Kenig, C.E., Ponce, G. and Vega, L.,  Well-Posedness and Scattering Results for the Generalized
Korteweg-de Vries Equation via the Contraction Principle, {\em Comm. Pure and App. Math.}, {\bf 66},  527-620
(1993).



\bibitem{Oh} Oh, P., Periodic stochastic Korteweg-de Vries equation with additive space-time noise, {\em Analysis and PDE}, {\bf 2-3}, 281-304 (2009).

\bibitem{Richards} Richards, G., Well-posedness of the stochastic KdV-Burgers equation, {\em Stochastic Processes and their Applications},
{\bf 124}, 1627-1647 (2014).

\bibitem{Temam} Temam, R., Sur un probl\`eme non lin\'eaire, {\em J. Math. Pures Appl.}, {\bf 48}, 159-172 (1969).

\bibitem{Zh}  P. Zhidkov, Korteweg-de Vries and nonlinear Schr\"odinger equations: qualitative theory, {\em Lecture Notes in Mathematics}, 1756. Springer-Verlag, Berlin, 2001. vi+147 pp
\end{thebibliography}

\end{document}